\newtheorem{cor}{Corollary}[section]
\newtheorem{lem}{Lemma}[section]
\newtheorem{prop}{Proposition}[section]
\newtheorem{example}{Example}
\newtheorem{defn}{Definition}[section]
\newtheorem{thm}{Theorem}
\newtheorem{conj}{Conjecture}
\newtheorem{rem}{Remark}
\newtheorem{notation}{Notation}
\numberwithin{equation}{section}
\begin{document}
\newcommand{\beqa}{\begin{eqnarray}}
\newcommand{\eeqa}{\end{eqnarray}}
\newcommand{\thmref}[1]{Theorem~\ref{#1}}
\newcommand{\secref}[1]{Sect.~\ref{#1}}
\newcommand{\lemref}[1]{Lemma~\ref{#1}}
\newcommand{\propref}[1]{Proposition~\ref{#1}}
\newcommand{\corref}[1]{Corollary~\ref{#1}}
\newcommand{\remref}[1]{Remark~\ref{#1}}
\newcommand{\er}[1]{(\ref{#1})}
\newcommand{\nc}{\newcommand}
\newcommand{\rnc}{\renewcommand}

\nc{\cal}{\mathcal}

\nc{\goth}{\mathfrak}
\rnc{\bold}{\mathbf}
\renewcommand{\frak}{\mathfrak}
\renewcommand{\Bbb}{\mathbb}

\def\cN{{\cal N}}
\newcommand{\hs}[1]{\hspace{#1 mm}}
\newcommand{\mb}[1]{\hs{4}\mbox{#1}\hs{4}}
\newcommand{\id}{\text{id}}
\nc{\Cal}{\mathcal}
\nc{\Xp}[1]{X^+(#1)}
\nc{\Xm}[1]{X^-(#1)}
\nc{\on}{\operatorname}
\nc{\ch}{\mbox{ch}}
\nc{\Z}{{\bold Z}}
\nc{\J}{{\mathcal J}}
\nc{\C}{{\bold C}}
\nc{\Q}{{\bold Q}}
\renewcommand{\P}{{\mathcal P}}
\nc{\N}{{\Bbb N}}
\nc\beq{\begin{equation}}
\nc\enq{\end{equation}}
\nc\lan{\langle}
\nc\ran{\rangle}
\nc\bsl{\backslash}
\nc\mto{\mapsto}
\nc\lra{\leftrightarrow}
\nc\hra{\hookrightarrow}
\nc\sm{\smallmatrix}
\nc\esm{\endsmallmatrix}
\nc\sub{\subset}
\nc\ti{\tilde}
\nc\nl{\newline}
\nc\fra{\frac}
\nc\und{\underline}
\nc\ov{\overline}
\nc\ot{\otimes}
\nc\bbq{\bar{\bq}_l}
\nc\bcc{\thickfracwithdelims[]\thickness0}
\nc\ad{\text{\rm ad}}
\nc\Ad{\text{\rm Ad}}
\nc\Hom{\text{\rm Hom}}
\nc\End{\text{\rm End}}
\nc\Ind{\text{\rm Ind}}
\nc\Res{\text{\rm Res}}
\nc\Ker{\text{\rm Ker}}
\rnc\Im{\text{Im}}
\nc\sgn{\text{\rm sgn}}
\nc\tr{\text{\rm tr}}
\nc\Tr{\text{\rm Tr}}
\nc\supp{\text{\rm supp}}
\nc\card{\text{\rm card}}
\nc\bst{{}^\bigstar\!}
\nc\he{\heartsuit}
\nc\clu{\clubsuit}
\nc\spa{\spadesuit}
\nc\di{\diamond}
\nc\cW{\cal W}
\nc\cG{\cal G}
\nc\al{\alpha}
\nc\bet{\beta}
\nc\ga{\gamma}
\nc\de{\delta}
\nc\ep{\epsilon}
\nc\io{\iota}
\nc\om{\omega}
\nc\si{\sigma}
\rnc\th{\theta}
\nc\ka{\kappa}
\nc\la{\lambda}
\nc\ze{\zeta}

\nc\vp{\varpi}
\nc\vt{\vartheta}
\nc\vr{\varrho}

\nc\Ga{\Gamma}
\nc\De{\Delta}
\nc\Om{\Omega}
\nc\Si{\Sigma}
\nc\Th{\Theta}
\nc\La{\Lambda}

\nc\boa{\bold a}
\nc\bob{\bold b}
\nc\boc{\bold c}
\nc\bod{\bold d}
\nc\boe{\bold e}
\nc\bof{\bold f}
\nc\bog{\bold g}
\nc\boh{\bold h}
\nc\boi{\bold i}
\nc\boj{\bold j}
\nc\bok{\bold k}
\nc\bol{\bold l}
\nc\bom{\bold m}
\nc\bon{\bold n}
\nc\boo{\bold o}
\nc\bop{\bold p}
\nc\boq{\bold q}
\nc\bor{\bold r}
\nc\bos{\bold s}
\nc\bou{\bold u}
\nc\bov{\bold v}
\nc\bow{\bold w}
\nc\boz{\bold z}

\nc\ba{\bold A}
\nc\bb{\bold B}
\nc\bc{\bold C}
\nc\bd{\bold D}
\nc\be{\bold E}
\nc\bg{\bold G}
\nc\bh{\bold H}
\nc\bi{\bold I}
\nc\bj{\bold J}
\nc\bk{\bold K}
\nc\bl{\bold L}
\nc\bm{\bold M}
\nc\bn{\bold N}
\nc\bo{\bold O}
\nc\bp{\bold P}
\nc\bq{\bold Q}
\nc\br{\bold R}
\nc\bs{\bold S}
\nc\bt{\bold T}
\nc\bu{\bold U}
\nc\bv{\bold V}
\nc\bw{\bold W}
\nc\bz{\bold Z}
\nc\bx{\bold X}

\nc\ca{\mathcal A}
\nc\cb{\mathcal B}
\nc\cc{\mathcal C}
\nc\cd{\mathcal D}
\nc\ce{\mathcal E}
\nc\cf{\mathcal F}
\nc\cg{\mathcal G}
\rnc\ch{\mathcal H}
\nc\ci{\mathcal I}
\nc\cj{\mathcal J}
\nc\ck{\mathcal K}
\nc\cl{\mathcal L}
\nc\cm{\mathcal M}
\nc\cn{\mathcal N}
\nc\co{\mathcal O}
\nc\cp{\mathcal P}
\nc\cq{\mathcal Q}
\nc\car{\mathcal R}
\nc\cs{\mathcal S}
\nc\ct{\mathcal T}
\nc\cu{\mathcal U}
\nc\cv{\mathcal V}
\nc\cz{\mathcal Z}
\nc\cx{\mathcal X}
\nc\cy{\mathcal Y}

\nc\e[1]{E_{#1}}
\nc\ei[1]{E_{\delta - \alpha_{#1}}}
\nc\esi[1]{E_{s \delta - \alpha_{#1}}}
\nc\eri[1]{E_{r \delta - \alpha_{#1}}}
\nc\ed[2][]{E_{#1 \delta,#2}}
\nc\ekd[1]{E_{k \delta,#1}}
\nc\emd[1]{E_{m \delta,#1}}
\nc\erd[1]{E_{r \delta,#1}}

\nc\ef[1]{F_{#1}}
\nc\efi[1]{F_{\delta - \alpha_{#1}}}
\nc\efsi[1]{F_{s \delta - \alpha_{#1}}}
\nc\efri[1]{F_{r \delta - \alpha_{#1}}}
\nc\efd[2][]{F_{#1 \delta,#2}}
\nc\efkd[1]{F_{k \delta,#1}}
\nc\efmd[1]{F_{m \delta,#1}}
\nc\efrd[1]{F_{r \delta,#1}}

\nc\fa{\frak a}
\nc\fb{\frak b}
\nc\fc{\frak c}
\nc\fd{\frak d}
\nc\fe{\frak e}
\nc\ff{\frak f}
\nc\fg{\frak g}
\nc\fh{\frak h}
\nc\fj{\frak j}
\nc\fk{\frak k}
\nc\fl{\frak l}
\nc\fm{\frak m}
\nc\fn{\frak n}
\nc\fo{\frak o}
\nc\fp{\frak p}
\nc\fq{\frak q}
\nc\fr{\frak r}
\nc\fs{\frak s}
\nc\ft{\frak t}
\nc\fu{\frak u}
\nc\fv{\frak v}
\nc\fz{\frak z}
\nc\fx{\frak x}
\nc\fy{\frak y}

\nc\fA{\frak A}
\nc\fB{\frak B}
\nc\fC{\frak C}
\nc\fD{\frak D}
\nc\fE{\frak E}
\nc\fF{\frak F}
\nc\fG{\frak G}
\nc\fH{\frak H}
\nc\fJ{\frak J}
\nc\fK{\frak K}
\nc\fL{\frak L}
\nc\fM{\frak M}
\nc\fN{\frak N}
\nc\fO{\frak O}
\nc\fP{\frak P}
\nc\fQ{\frak Q}
\nc\fR{\frak R}
\nc\fS{\frak S}
\nc\fT{\frak T}
\nc\fU{\frak U}
\nc\fV{\frak V}
\nc\fZ{\frak Z}
\nc\fX{\frak X}
\nc\fY{\frak Y}
\nc\tfi{\ti{\Phi}}
\nc\bF{\bold F}
\rnc\bol{\bold 1}

\nc\ua{\bold U_\A}

\def\cA{{\cal A}}   \def\cB{{\cal B}}   \def\cC{{\cal C}}
\def\cD{{\cal D}}   \def\cE{{\cal E}}   \def\cF{{\cal F}}  
\def\cG{{\cal G}}   \def\cH{{\cal H}}   \def\cI{{\cal I}}
\def\cJ{{\cal J}}   \def\cK{{\cal K}}   \def\cL{{\cal L}}
\def\cM{{\cal M}}   \def\cN{{\cal N}}   \def\cO{{\cal O}}
\def\cP{{\cal P}}   \def\cQ{{\cal Q}}   \def\cR{{\cal R}}
\def\cS{{\cal S}}   \def\cT{{\cal T}}   \def\cU{{\cal U}}
\def\cV{{\cal V}}   \def\cW{{\cal W}}   \def\cX{{\cal X}}
\def\cY{{\cal Y}}   \def\cZ{{\cal Z}}  \def\cRR{{\cal {\mathbb R}}}

\newcommand{\ben}{\begin{eqnarray}}
\newcommand{\een}{\end{eqnarray}}
\newcommand{\nonu}{\nonumber \\} 
\newcommand{\FF}{{\mathbb F}}
\newcommand{\A}{{\mathbb A}}
\newcommand{\GG}{{\mathbb G}}

\nc\qinti[1]{[#1]_i}
\nc\q[1]{[#1]_q}
\nc\xpm[2]{E_{#2 \delta \pm \alpha_#1}}  
\nc\xmp[2]{E_{#2 \delta \mp \alpha_#1}}
\nc\xp[2]{E_{#2 \delta + \alpha_{#1}}}
\nc\xm[2]{E_{#2 \delta - \alpha_{#1}}}
\nc\hik{\ed{k}{i}}
\nc\hjl{\ed{l}{j}}
\nc\qcoeff[3]{\left[ \begin{smallmatrix} {#1}& \\ {#2}& \end{smallmatrix}
\negthickspace \right]_{#3}}
\nc\qi{q}
\nc\qj{q}

\nc\ufdm{{_\ca\bu}_{\rm fd}^{\le 0}}


\nc\isom{\cong} 

\nc{\pone}{{\Bbb C}{\Bbb P}^1}
\nc{\pa}{\partial}
\def\H{\mathcal H}
\def\L{\mathcal L}
\nc{\F}{{\mathcal F}}
\nc{\Sym}{{\goth S}}
\nc{\arr}{\rightarrow}
\nc{\larr}{\longrightarrow}

\nc{\ri}{\rangle}
\nc{\lef}{\langle}
\nc{\W}{{\mathcal W}}
\nc{\uqatwoatone}{{U_{q,1}}(\su)}
\nc{\uqtwo}{U_q(\goth{sl}_2)}
\nc{\dij}{\delta_{ij}}
\nc{\divei}{E_{\alpha_i}^{(n)}}
\nc{\divfi}{F_{\alpha_i}^{(n)}}
\nc{\Lzero}{\Lambda_0}
\nc{\Lone}{\Lambda_1}
\nc{\ve}{\varepsilon}
\nc{\phioneminusi}{\Phi^{(1-i,i)}}
\nc{\phioneminusistar}{\Phi^{* (1-i,i)}}
\nc{\phii}{\Phi^{(i,1-i)}}
\nc{\Li}{\Lambda_i}
\nc{\Loneminusi}{\Lambda_{1-i}}
\nc{\vtimesz}{v_\ve \otimes z^m}

\nc{\asltwo}{\widehat{\goth{sl}_2}}
\nc\ag{\widehat{\goth{g}}}  
\nc\teb{\tilde E_\boc}
\nc\tebp{\tilde E_{\boc'}}

\title[An attractive basis for the $q-$Onsager algebra]{An attractive basis for the $q-$Onsager algebra}
\author{Pascal Baseilhac}
\address{Laboratoire de Math\'ematiques et Physique Th\'eorique CNRS/UMR 6083,
          F\'ed\'eration Denis Poisson, Universit\'e de Tours, Parc de Grammont, 37200 Tours, FRANCE}
\email{baseilha@lmpt.univ-tours.fr}

\author{Samuel Belliard}
\address{Institut de Physique Th\'eorique, DSM, CEA, URA2306 CNRS Saclay, F-91191 Gif-sur-Yvette, FRANCE}
\email{samuel.belliard@cea.fr}

\begin{abstract}  Let $\textsf{A},\textsf{A}^*$ be the fundamental generators of the $q-$Onsager algebra. A linear basis for the $q-$Onsager algebra is known as the `zig-zag' basis \cite{IT10}. In this letter, an attractive basis for the $q-$Onsager algebra is conjectured, based on the relation between the $q-$Onsager algebra and a quotient of the infinite dimensional algebra ${\cal A}_q$ introduced in \cite{BK}.
\end{abstract}

\maketitle

\vskip -0.5cm

{\small MSC:\ 81R50;\ 81R10;\ 81U15.}

{{\small  {\it \bf Keywords}: $q-$Onsager algebra; Tridiagonal algebra; Current algebra; Reflection equation; Askey-Wilson algebra.}}
\vspace{0cm}

\vspace{0mm}

\section{Introduction}
Introduced in the mathematical physics literature, the Onsager algebra (OA) admits two presentations. A first presentation is given in terms of two generators $A_0,A_1$ subject to a pair of relations, the so-called Dolan-Grady relations \cite{DG}:
\beqa
[A_0,[A_0,[A_0,A_1]]]=16[A_0,A_1], \qquad [A_1,[A_1,[A_1,A_0]]]=16[A_1,A_0].\label{OAP1}
\eeqa
A second presentation originates in Onsager's work on the exact solution of the two-dimensional Ising model \cite{Ons44}. It is given in terms of generators $\{A_k,G_l|k,l\in{\mathbb Z}\}$ and relations:
\beqa
\big[A_{k},A_{l}\big]= 4G_{k-l},\quad \big[G_l,A_k\big] =2A_{k+l}-2A_{k-l},\quad \big[G_k,G_l\big] =0.\label{OAP2}
\eeqa
In the 90's, the isomorphism between these first and second presentations was established \cite{Davies,Roan}, and generators $\{A_k,G_l|k,l\in{\mathbb Z}\}$ were systematically written as polynomials in $A_0,A_1$. For a review of the proof, see \cite{EC12}. Note that the Onsager algebra has also a presentation as an invariant subalgebra of the loop algebra ${\mathbb C}[t,t^{-1}]\otimes sl_2$ by an involution \cite{Davies,Roan}. \vspace{1mm} 

The $q-$Onsager algebra ($q-$OA) has been introduced as a $q-$deformed analog of the presentation (\ref{OAP1}). In this presentation, the defining relations are given in Definition \ref{qons}. It first appeared in the mathematical literature  in the context of $P-$ and $Q-$polynomial schemes, as a special case of the so-called tridiagonal algebra \cite{Ter03}. It is now well-understood that the
representation theory of tridiagonal algebras - in particular the
$q-$OA - is intimately connected with the theory of Leonard pairs and tridiagonal pairs  developped by Terwilliger and coauthors (see all the references citing \cite{Ter03}). Moreover, the deformation scheme of the Onsager algebra to the $q-$Onsager algebra is given by Manin triples technique \cite{BC12}.
Independently, the $q-$OA also appeared in the context of quantum integrable systems \cite{B1,BK}, the spectral parameter dependent reflection equation algebra \cite{B1,BSh1} and as a certain coideal subalgebra of $U_q(\widehat{sl_2})$ \cite{BB}.  \vspace{1mm} 

By analogy with the OA, besides the first presentation associated with (\ref{qDG}) it was expected that the $q-$OA  admits a second presentation, that would be associated with an infinite number of generators satisfying certain $q-$deformed analogs of (\ref{OAP2}). In this direction, one strategy\footnote{Another strategy is based on the braid group action of the $q-$Onsager algebra \cite{BK4}.} was to study in details the structure of more general (non-scalar) solutions of the reflection equation algebra, the so-called Sklyanin operators \cite{Skly88}. An 
 infinite dimensional algebra called  ${\cal A}_q$ with generators $\{{\cal W}_{-k},{\cal W}_{k+1},{\cal G}_{k+1},\tilde{\cal G}_{k+1}|k\in{\mathbb Z}_+\}$ and relations (see Definition \ref{defnCA}) was introduced in \cite{BSh1}, based on the results of \cite{BK}. In particular, for the explicit examples of Sklyanin operators considered in \cite{BK} it was observed that the first generators ${\cal W}_0,{\cal W}_1$  satisfy the pair of relations (\ref{qDG}). By a brute force calculation, it was also observed that the eight generators denoted ${\cal W}_{-1},{\cal W}_{-2},{\cal W}_{2},{\cal W}_{3},{\cal G}_{1},{\cal G}_{2},\tilde{\cal G}_{1},\tilde{\cal G}_{2}$ can be written as polynomials in ${\cal W}_0,{\cal W}_1$ only \cite[see eqs. (49)]{BK}.  This strongly suggested that the algebra  ${\cal A}_q$ or certain of its quotients are natural candidates for a second presentation of the $q-$OA.\vspace{1mm}
 
 In this letter, we investigate the relationship between the $q-$Onsager algebra and a class of quotients of the infinite dimensional algebra ${\cal A}_q$, denoted $\tilde{\cal A}_q^{\{\delta\}}$ (see Definition \ref{tildeAq}). It is shown that all generators of $\tilde{\cal A}_q^{\{\delta\}}$ are polynomials in ${\cal W}_0,{\cal W}_1$ (see Corollary \ref{cor31}), explicitly determined through recursive formulae (see Proposition \ref{prop1} and Example \ref{ex1}). Then, we propose a Poincar\'e-Birkhoff-Witt (PBW) basis for the algebra ${\cal A}_q^{\{\delta\}}$ (see Conjecture \ref{conj1}) in Section 4. Based on the existence of an homomorphism from $\tilde{\cal A}_q^{\{\delta\}}$ to the $q-$Onsager algebra (see Proposition \ref{propmap}) as well as a detailed comparison between the PBW basis for ${\cal A}_q^{\{\delta\}}$ and the so-called `zig-zag' basis introduced by Ito and Terwilliger in \cite{IT10}, it is conjectured that the algebra ${\cal A}_q^{\{\delta\}}$ and the $q-$Onsager algebra are isomorphic, see Conjecture \ref{conj2}. In other words, we claim that the algebra $\tilde{\cal A}_q^{\{\delta\}}$ gives a second presentation for the $q-$Onsager algebra.
 \vspace{1mm}
\begin{notation} We fix a field ${\mathbb K}$ and a nonzero $q\in {\mathbb K}$ assumed not to be a root of unity. We denote ${\mathbb Z}_+$ the set of nonnegative integers. We denote $[X,Y]_q=qXY-q^{-1}YX$. In the text, $u$ and $v$ denote indeterminates. Let $n$ be a positive integer. We use the notation $[n]_q=(q^n-q^{-n})/(q-q^{-1})$.
\end{notation}

\section{The algebra ${\cal A}_q$ and central elements}
In this section, the infinite dimensional quantum algebra ${\cal A}_q$ introduced in \cite{BSh1} is recalled through generators and relations.  A generating function for central elements of ${\cal A}_q$  is constructed, that will play a central role in the analysis of further sections. 
\begin{defn}[See \cite{BSh1}]{\label{defnCA}}  ${\cal A}_q$ is an associative algebra with unit $1$, generators $\{{\cW}_{-k},{\cW}_{k+1},{\cG}_{k+1},{\tilde{\cG}}_{k+1}|k\in{\mathbb Z}_+\}$ and nonzero scalar $\rho\in{\mathbb K}$. Define $U=(qu^2+q^{-1}u^{-2})/(q+q^{-1})$ and $V=(qv^2+q^{-1}v^{-2})/(q+q^{-1})$.
Introduce the formal power series:
\begin{align}
{\cW}_+(u)=\sum_{k\in {\mathbb Z}_+}{\cW}_{-k}U^{-k-1} \ , \quad {\cW}_-(u)=\sum_{k\in {\mathbb Z}_+}{\cW}_{k+1}U^{-k-1} \ ,\label{c1}\\
 \quad {\cG}_+(u)=\sum_{k\in {\mathbb Z}_+}{\cG}_{k+1}U^{-k-1} \ , \quad {\cG}_-(u)=\sum_{k\in {\mathbb Z}_+}\tilde{{\cG}}_{k+1}U^{-k-1} \ .\label{c2}
\end{align}
The defining relations are:
\begin{align}
&&\big[{\cW}_\pm(u),{\cW}_\pm(v)\big]=0\ ,\qquad\qquad\qquad\qquad\qquad\qquad\qquad\label{ec1}\\
&&\big[{\cW}_+(u),{\cW}_-(v)\big]+\big[{\cW}_-(u),{\cW}_+(v)\big]=0\ ,\qquad\qquad\qquad\qquad\qquad\qquad\qquad\label{ec3}\\
%
&&(U-V)\big[{\cW}_\pm(u),{\cW}_\mp(v)\big]= \frac{(q-q^{-1})}{\rho(q+q^{-1})}\left({\cG}_\pm(u){\cG}_\mp(v)-{\cG}_\pm(v){\cG}_\mp(u)\right)\qquad\qquad\qquad\label{ec4}\\
&& \qquad \qquad\qquad\qquad\qquad\qquad\qquad\qquad+ \frac{1}{(q+q^{-1})} \big({\cG}_\pm(u)-{\cG}_\mp(u)+{\cG}_\mp(v)-{\cG}_\pm(v)\big)\ ,\nonumber\\
&&{\cW}_\pm(u){\cW}_\pm(v)-{\cW}_\mp(u){\cW}_\mp(v)+\frac{1}{\rho(q^2-q^{-2})}\big[{\cG}_\pm(u),{\cG}_\mp(v)\big]\qquad\qquad\qquad\qquad \qquad\label{ec5}\\
&&\qquad\qquad\qquad\qquad\qquad\qquad+ \ \frac{1-UV}{U-V}\big({\cW}_\pm(u){\cW}_\mp(v)-{\cW}_\pm(v){\cW}_\mp(u)\big)=0\ ,\nonumber
\end{align}
\begin{align}
&&U\big[{\cG}_\mp(v),{\cW}_\pm(u)\big]_q -V\big[{\cG}_\mp(u),{\cW}_\pm(v)\big]_q - (q-q^{-1})\big({\cW}_\mp(u){\cG}_\mp(v)-{\cW}_\mp(v){\cG}_\mp(u)\big)\label{ec6}\\
&&\qquad\qquad\qquad\qquad\qquad\qquad\qquad \quad + \ \rho \big(U{\cW}_\pm(u)-V{\cW}_\pm(v)-{\cW}_\mp(u)+{\cW}_\mp(v)\big)=0\ ,\nonumber\\
&&U\big[{\cW}_\mp(u),{\cG}_\mp(v)\big]_q -V\big[{\cW}_\mp(v),{\cG}_\mp(u)\big]_q - (q-q^{-1})\big({\cW}_\pm(u){\cG}_\mp(v)-{\cW}_\pm(v){\cG}_\mp(u)\big)\label{ec7}\\
&& \qquad\qquad\qquad\qquad\qquad\qquad\qquad\quad+  \ \rho \big(U{\cW}_\mp(u)-V{\cW}_\mp(v)-{\cW}_\pm(u)+{\cW}_\pm(v)\big)=0\ ,\nonumber\\
&&\big[{\cG}_\epsilon(u),{\cW}_\pm(v)\big]+\big[{\cW}_\pm(u),{\cG}_\epsilon(v)\big]=0 \ ,\quad \forall \epsilon=\pm\label{ec8}\ ,\qquad\qquad\qquad\qquad\qquad\qquad\qquad\\
&&\big[{\cG}_\pm(u),{\cG}_\pm(v)\big]=0\ ,\label{ec9}\qquad\qquad\qquad\qquad\qquad\qquad\qquad\\ 
&&\big[{\cG}_+(u),{\cG}_-(v)\big]+\big[{\cG}_-(u),{\cG}_+(v)\big]=0\ .\qquad\qquad\qquad\qquad\qquad\qquad\qquad\ \label{ec16}
\end{align}
\end{defn}
\begin{rem}\label{remomega}
There exists an automorphism $\Omega$ in ${\cal A}_q$:
\beqa
\Omega({\cW}_{-k})={\cW}_{k+1}\ , \quad \Omega({\cW}_{k+1})={\cW}_{-k}\ , \quad \Omega({\cG}_{k+1})=\tilde{\cG}_{k+1}\ , \quad \Omega(\tilde{\cG}_{k+1})={\cG}_{k+1} \ .\nonumber  
\eeqa
\end{rem}

%
%
\vspace{1mm}

 In ${\cal A}_q$, an infinite number of central elements can be constructed using the explicit connection between the algebra ${\cal A}_q$ and the reflection equation algebra introduced in \cite{Cher84,Skly88}. Below we exhibit a generating function for those elements that commute with the currents (\ref{c1}),(\ref{c2}).
\begin{prop}\label{propdelta} The element 
\beqa
\Delta(u)&=& -(q-q^{-1})(q^2+q^{-2})\Big(\cW_+(u)\cW_+(uq) + \cW_-(u)\cW_-(uq)\Big) - \frac{(q-q^{-1})}{\rho} \Big(\cG_+(u)\cG_-(uq) + \cG_-(u)\cG_+(uq)\Big) \nonumber\\
&&\quad +(q-q^{-1})(u^2q^2+u^{-2}q^{-2})\Big(\cW_+(u)\cW_-(uq) + \cW_-(u)\cW_+(uq)\Big)\nonumber\\
&& \quad - \ \cG_+(u) - \cG_+(uq) - \cG_-(u) - \cG_-(uq)  \  \nonumber
 \label{deltau}
\eeqa
satisfies \ $[\Delta(u),\cW_\pm(v)]=[\Delta(u),\cG_\pm(v)]=0$.
\end{prop}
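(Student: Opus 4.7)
The plan is to exploit the origin of $\cA_q$ as a reflection equation algebra. Following \cite{BSh1,BK}, the currents $\cW_\pm(u)$ and $\cG_\pm(u)$ are the matrix elements of a $2\times 2$ Sklyanin K-operator $K(u)$ satisfying a spectral-parameter-dependent reflection equation with some trigonometric $R$-matrix $R(u/v)$. The expression $\Delta(u)$ should be recognised as (up to normalisation) the Sklyanin quantum determinant $\mathrm{sdet}\,K(u)$, which in this setting is constructed by contracting $K(u)$ with $K(uq)$ against the rank-one projector at the ``crossing'' point $u/v=q$ of $R$. By the standard argument of \cite{Skly88}, such a quantum determinant is automatically central in the reflection equation algebra, hence in its quotient $\cA_q$.

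Concretely, I would proceed in two steps. First, I would rewrite $\Delta(u)$ as a contracted bilinear in $K(u)$ and $K(uq)$; the four pieces in the statement correspond respectively to the diagonal--diagonal $\cW_\pm\cW_\pm$ contraction, the off-diagonal--off-diagonal $\cG_\pm\cG_\mp$ contraction, the ``crossed'' $\cW_\pm\cW_\mp$ contraction carrying the $u^2q^2+u^{-2}q^{-2}$ factor, and the linear terms $\cG_\pm(u)+\cG_\pm(uq)$ which arise from the projector's action on the unit part of $K$. Once this identification is fixed, centrality follows from the reflection equation evaluated at the specific point where $R(u/v)$ degenerates into a one-dimensional projector annihilating the opposite projector, together with the unitarity relation for $R$.

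Alternatively, I would give a direct verification using only Definition \ref{defnCA}. One computes $[\Delta(u),\cW_+(v)]$ by moving $\cW_+(v)$ through each of the eight summands using the relations \er{ec1}, \er{ec3}, \er{ec4}, \er{ec5} for the $\cW$-$\cW$ blocks and \er{ec6}, \er{ec7}, \er{ec8} for the $\cW$-$\cG$ blocks, and similarly for $[\Delta(u),\cG_+(v)]$ using \er{ec8}, \er{ec9}, \er{ec16}. The key technical point is that the $u$-$uq$ pairing inside $\Delta(u)$ makes the prefactors $(U-V)|_{u\mapsto uq}$ combine with the explicit $(q-q^{-1})(u^2q^2+u^{-2}q^{-2})$ factor so that the quadratic terms produced by \er{ec4} cancel against those produced by \er{ec5}, while the linear corrections (the $\cW_\mp(u)-\cW_\mp(v)$ tails in \er{ec6}, \er{ec7}) are absorbed by the linear $\cG_\pm$ terms in $\Delta(u)$. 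Finally, commutativity with $\cW_-(v)$ and $\cG_-(v)$ follows either by applying the same calculation with signs reversed or, more economically, by invoking the automorphism $\Omega$ of Remark \ref{remomega}, under which $\Delta(u)$ is manifestly invariant (each summand is symmetric in $\pm$).

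The main obstacle will be organising the cancellation in the direct approach: the quadratic-in-$K$ terms from \er{ec4} and \er{ec5} generate $\cG\cG$ and $\cW\cW$ contributions whose spectral dependence must be matched precisely by the $(U-V)|_{v\to uq}$ normalisation hidden in the choice of shift $u\mapsto uq$. This is exactly the combinatorics that the reflection equation / Sklyanin determinant interpretation takes care of automatically, which is why I would prefer to present the proof in that language and only fall back on the explicit computation as a consistency check on the low-order components ${\cW}_{-k},{\cW}_{k+1},{\cG}_{k+1},\tilde{\cG}_{k+1}$ for small $k$.
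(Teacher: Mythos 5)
Your primary route is exactly the paper's proof: the authors invoke \cite[Theorem 1]{BSh1} to package the relations \er{ec1}--\er{ec16} as a reflection equation for the $K$-matrix (\ref{Kmat}), apply \cite[Proposition 5]{Skly88} to get centrality of the quantum determinant $\Gamma(u)=tr\big(P^{-}_{12}(K(u)\otimes I\!\!I)R(u^2q)(I\!\!I\otimes K(uq))\big)$, and then identify $\Gamma(u)$ with $\frac{(u^2q^2-u^{-2}q^{-2})}{2(q-q^{-1})}\big(\Delta(u)-\frac{2\rho}{q-q^{-1}}\big)$ by direct expansion, just as you propose. The direct verification you sketch as a fallback is not needed and is not what the paper does.
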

\begin{proof} By \cite[Theorem 1]{BSh1}, the defining relations (\ref{ec1})-(\ref{ec16}) with $\rho=k_+k_-(q+q^{-1})^2$ can be written in the `compact' form of a reflection equation \cite{Skly88}:
\begin{align} R(u/v)\ (K(u)\otimes I\!\!I)\ R(uv)\ (I\!\!I \otimes K(v))\
= \ (I\!\!I \otimes K(v))\ R(uv)\ (K(u)\otimes I\!\!I)\ R(u/v),
\label{RE} \end{align}
where $ I\!\!I$ denotes the $2\times 2$ identity matrix, 
\begin{align}
R(u) =\left(
\begin{array}{cccc} 
 uq -  u^{-1}q^{-1}    & 0 & 0 & 0 \\
0  &  u -  u^{-1} & q-q^{-1} & 0 \\
0  &  q-q^{-1} & u -  u^{-1} &  0 \\
0 & 0 & 0 & uq -  u^{-1}q^{-1}
\end{array} \right) \ \label{R}
\end{align}
and
\beqa
K(u) =\left(
\begin{array}{cc}
  uq \cW_+(u) - u^{-1}q^{-1} \cW_-(u) & \frac{1}{k_-(q+q^{-1})}\cG_+(u) + \frac{k_+(q+q^{-1})}{(q-q^{-1})}  \\
  \frac{1}{k_+(q+q^{-1})}\cG_-(u) + \frac{k_-(q+q^{-1})}{(q-q^{-1})}\   &  uq \cW_-(u) - u^{-1}q^{-1} \cW_+(u)  \\
\end{array} \right),\label{Kmat}
\eeqa
where $k_\pm$ are nonzero scalars.\vspace{1mm} 

By \cite[Proposition 5]{Skly88}, the so-called quantum determinant:
\beqa
\Gamma(u)=tr\big(P^{-}_{12}(K(u)\otimes I\!\!I)\ R(u^2q) (I\!\!I \otimes K(uq))\big),\ \label{gamma}
\eeqa
where $P^-_{12}=(1-P)/2$ with $P=R(1)/(q-q^{-1})$, is commuting with the four entries of $K-$matrix (\ref{Kmat}): 
\beqa
\big[\Gamma(u),(K(u))_{ij}\big]=0 \quad \mbox{for any} \quad i,j. \label{qdep}
\eeqa
Inserting  (\ref{Kmat}) into (\ref{gamma}), by straightforward calculations one identifies:
\beqa
\Gamma(u)= \frac{(u^2q^2-u^{-2}q^{-2})}{2(q-q^{-1})}\left(\Delta(u) - \frac{2\rho}{(q-q^{-1})}\right),\nonumber
\eeqa
with $\Delta(u)$ as given above. From (\ref{qdep}), it follows $[\Delta(u),\cW_\pm(v)]=[\Delta(u),\cG_\pm(v)]=0$.
\end{proof}
%
%
\vspace{1mm}

From Proposition \ref{propdelta}, $\Delta(u)$  provides a generating function for elements that commute with all generators $\{{\cW}_{-k},{\cW}_{k+1},{\cG}_{k+1},{\tilde{\cG}}_{k+1}|k\in {\mathbb Z}_+\}$ of  ${\cal A}_q$. Explicit expressions are now derived.
\begin{lem}\label{lemdetta}
 Let $i,j,k,l\in {\mathbb Z}_+$ and denote $\bar{k}=1$ (resp. $0$) for $k$ even (resp. odd)  and $\null[\frac{k}{2}]=\null \frac{k}{2}$ (resp. $\frac{k-1}{2}$)  for   $k$ even (resp. odd). Define
\beqa
\qquad \Delta_{k+1} &=&{\mathbb G}_{k+1}+ \ \sum_{l=0}^{\null[\frac{k}{2}]-1}d^{(k)}_{l}{\mathbb G}_{2(l+1)-\bar{k}} +\sum_{l=0}^{\null[\frac{k}{2}]}\sum_{i+j=2l+1-\bar{k}} f^{(k)}_{ij} {\mathbb W}_{ij}
+\sum_{l=0}^{\null[\frac{k}{2}]-\bar{k}}\sum_{i+j=2l+\bar{k}}  e^{(k)}_{ij}{\mathbb F}_{ij}\label{deltak}
\eeqa
where
\beqa
{\mathbb G}_{i+1}&=& \cG_{i+1} + \tilde{\cG}_{i+1}, \nonumber\\
 {\mathbb W}_{ij}&=&(q-q^{-1})(\cW_{-i}\cW_{j+1} +\cW_{i+1}\cW_{-j} ),\nonumber\\
{\mathbb F}_{ij}&=&(q-q^{-1})\Big((q^2+q^{-2})(\cW_{-i}\cW_{-j}+\cW_{i+1}\cW_{j+1})+\frac{1}{\rho} (\cG_{j+1}\tilde{\cG}_{i+1} + \tilde{\cG}_{j+1}\cG_{i+1})\Big)\nonumber
\eeqa
and
\ben
e^{(k)}_{ij}&=&-c_{k+1}^{-1}\sum_{m=0}^{\null[\frac{k}{2}]-\frac{i+j+\bar k}{2}}w_{\null[\frac{k}{2}]-\frac{i+j+\bar k}{2}-m}^iw_m^jq^{-2j-4m-2},\nonumber\\
f^{(k)}_{ij}&=&c_{k+1}^{-1}\sum_{m=0}^{\null[\frac{k}{2}]-\frac{i+j+\bar k-1}{2}}w_{\null[\frac{k}{2}]-\frac{i+j+\bar k-1}{2}-m}^i \Big(w_m^j+w_{m-1}^j\Big)q^{-2j-4m} ,\nonumber\\
d^{(k)}_l &=&-c_{k+1}^{-1} w_{\null[\frac{k}{2}]-l}^{2l+1-\bar{k}}(1+q^{-2k-2})\nonumber 
\een
with 
\beqa
c_{k+1}=-\frac{(q+q^{-1})^{k+1}(q^{k+1}+q^{-k-1})}{q^{2k+2}}, \quad w_{m}^i=(-1)^m\frac{(m+i)!}{m!i!}(q+q^{-1})^{i+1}q^{-i-2m-1}.\nonumber
\eeqa
 For any $k$ and all  ${\cal X}\in {\cal A}_q$ we have  $[\Delta_{k+1},{\cal X}]=0$. 
\end{lem}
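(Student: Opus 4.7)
The plan is to deduce the lemma directly from Proposition~\ref{propdelta} by expanding the central generating function $\Delta(u)$ as a formal power series in $u^{-2}$ and reading off the coefficients. Since Proposition~\ref{propdelta} states that $\Delta(u)$ commutes with every $\cW_\pm(v)$ and $\cG_\pm(v)$, hence with every generator of $\cA_q$, each coefficient in any formal expansion of $\Delta(u)$ in a single formal variable is automatically central. The task therefore reduces to identifying this expansion explicitly and verifying that $\Delta_{k+1}$, as defined in the lemma, is (up to the nonzero scalar $c_{k+1}$) the coefficient of $u^{-2k-2}$ in $\Delta(u)$.

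The key preliminary is the expansion of the two elementary factors appearing everywhere in $\Delta(u)$. From $U=(qu^2+q^{-1}u^{-2})/(q+q^{-1})$ and the binomial series one finds
\[
U^{-i-1} = \sum_{m \ge 0} w_m^i\, u^{-2i-2-4m}, \qquad U(uq)^{-i-1} = \sum_{m \ge 0} w_m^i\, q^{-2i-4m-2}\, u^{-2i-2-4m},
\]
with $w_m^i$ exactly as in the statement. This accounts both for the appearance of $w_m^i$ throughout the lemma and for the factor $q^{-2j-4m-2}$ in the formula for $e^{(k)}_{ij}$, which originates from the second factor in every product being evaluated at $uq$. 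Substituting the defining series $\cW_\pm(u)$, $\cG_\pm(u)$ into $\Delta(u)$ and collecting by total power of $u^{-2}$ then produces an expansion of the form $\Delta(u) = \sum_{k \ge 0} c_{k+1}\, \Delta_{k+1}\, u^{-2k-2}$, the normalization $c_{k+1}$ being fixed by requiring the coefficient of ${\mathbb G}_{k+1}$ in $\Delta_{k+1}$ to equal $1$.

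Matching the four types of terms in $\Delta(u)$ against the four summands of $\Delta_{k+1}$ proceeds as follows. The diagonal pieces $-\cG_\pm(u)-\cG_\pm(uq)$ contribute $-(1+q^{-2k-2})w^i_{(k-i)/2}\,{\mathbb G}_{i+1}$ to the coefficient of $u^{-2k-2}$; the extremal case $i=k$ produces the leading $c_{k+1}{\mathbb G}_{k+1}$, and the cases $i<k$ with $i\equiv k \bmod 2$ yield the $d^{(k)}_l{\mathbb G}_{2(l+1)-\bar k}$ terms after the reindexing $i=2l+1-\bar k$. The prefactor $u^2q^2+u^{-2}q^{-2}$ attached to $\cW_\pm(u)\cW_\mp(uq)$ shifts the total power of $u^{-2}$ by $\pm 1$, which forces the ${\mathbb W}_{ij}$-type contributions to satisfy $i+j\equiv k\bmod 2$ (encoded by $i+j=2l+1-\bar k$) and, after an index shift $m\mapsto m-1$ for the $u^{-2}q^{-2}$ half, produces the combination $w_m^j+w_{m-1}^j$ appearing in $f^{(k)}_{ij}$. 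The remaining pairs $\cW_\pm(u)\cW_\pm(uq)$ and $\cG_\pm(u)\cG_\mp(uq)$ combine, with the relative weight $(q^2+q^{-2})$ versus $1/\rho$ already built into $\Delta(u)$, into the ${\mathbb F}_{ij}$ terms with the opposite parity $i+j=2l+\bar k$ and a single coefficient $e^{(k)}_{ij}$; in each case, the constraint $m_1+m_2=[k/2]-(i+j+\bar k)/2$ on the expansion indices of the two factors recovers the stated summation range. The main obstacle is purely combinatorial bookkeeping: tracking the parity distinctions encoded by $\bar k$, the $q$-power shifts arising from $u\mapsto uq$, and the index shifts produced by the prefactor $u^2q^2+u^{-2}q^{-2}$, so that the separate $\cW\cW$ and $\cG\cG$ contributions assemble into the single coefficient $e^{(k)}_{ij}$ of ${\mathbb F}_{ij}$. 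Once the two elementary expansions above are in place, the centrality of every $\Delta_{k+1}$ is an immediate consequence of Proposition~\ref{propdelta}.
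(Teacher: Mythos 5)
Your proposal is correct and takes essentially the same route as the paper's own proof: expand $\Delta(u)$ about $u=\infty$ via $U^{-i-1}=\sum_{m\ge 0} w_m^i\,u^{-2i-2-4m}$ (with the shifted argument $uq$ supplying the $q$-power factors), collect the coefficient of $u^{-2k-2}$ to identify $c_{k+1}\Delta_{k+1}$, and conclude centrality of each $\Delta_{k+1}$ directly from Proposition~\ref{propdelta}. Your accounting of where the factors $w_m^j+w_{m-1}^j$, $q^{-2j-4m-2}$ and the parity constraints come from matches the bookkeeping carried out in the paper's displayed expansion.
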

\begin{proof}
Insert (\ref{c1}), (\ref{c2}) in $\Delta(u)$ as defined in Proposition \ref{propdelta}. Observe that $U$ admits a power series expansion about $u=\infty$ or $u=0$. About $u=\infty$,  $\Delta(u)$ admits the power series expansion in $u^{-1}$ of the form $\sum_{k=0}^{\infty} u^{-2k-2}c_{k+1}\Delta_{k+1}$. The coefficients  $e^{(k)}_{ij} ,f^{(k)}_{ij}, d^{(k)}_l$ are determined as follows. Consider the power series expansion of $U(u)$ about $u=\infty$: 
\ben
U(u)^{-1-i}=\sum_{m=0}^\infty w_{m}^i u^{-2i-4k-2}. \nonumber
\een
From the definition of $\Delta(u)$ in Proposition \ref{propdelta}, the expansion about $u=\infty$ yields to:
\ben
\Delta(u)&=& \sum_{k=0}^{\infty}u^{-2k-2} \Big( \sum_{i+2j=k} w_{j}^i(1+q^{-2k-2}) {\mathbb G}_{i+1}+ \sum_{i+j+2(n+m)=k} (w_{n}^iw_{m}^j+w_{n}^iw_{m-1}^j)q^{-2j-4m+1} {\mathbb W}_{ij}\nonumber\\
&&\quad+ \sum_{i+j+2(n+m)=k-1} w_{n}^iw_{m}^jq^{-2j-4m-1} {\mathbb F}_{ij}\Big)\nonumber\\
\nonumber&=& \sum_{k=0}^{\infty}u^{-2k-2}c_{k+1} \Big( {\mathbb G}_{k+1}+\sum_{l=0}^{\null[\frac{k}{2}]-1} \Big( \frac{ w_{\null[\frac{k}{2}]-l}^{2l+1-\bar{k}}(1+q^{-2k-2})}{c_{k+1}}  \Big){\mathbb G}_{2(l+1)-\bar{k}}\\
&&\quad +\sum_{l=0}^{\null[\frac{k}{2}]} \sum_{i+j=2l+1-\bar k} \Big(\sum_{m=0}^{\null[\frac{k}{2}]-\frac{i+j+\bar k-1}{2}}\Big(w_{\null[\frac{k}{2}]-\frac{i+j+\bar k-1}{2}-m}^iw_m^j+w_{\null[\frac{k}{2}]-\frac{i+j+\bar k-1}{2}-m}^iw_{m-1}^j\Big)\frac{q^{-2j-4m}}{c_{k+1}}\Big){\mathbb W}_{ij}\nonumber\\
&&\quad+\sum_{l=0}^{\null[\frac{k}{2}]-\bar k} \sum_{i+j=2l+\bar k} \Big( \sum_{m=0}^{\null[\frac{k}{2}]-\frac{i+j+\bar k}{2}}w_{\null[\frac{k}{2}]-\frac{i+j+\bar k}{2}-m}^iw_m^j\frac{q^{-2j-4m-2}}{c_{k+1}}\Big){\mathbb F}_{ij}\Big)\nonumber\\
&=&\sum_{k=0}^{\infty} u^{-2k-2}c_{k+1}\Delta_{k+1}\nonumber.
\een
\\
Then,  it follows from Proposition \ref{propdelta} and  (\ref{c1})-(\ref{c2}), that $\Delta_{k+1}$ is central in ${\cal A}_q$.
Note that the same central elements $\Delta_{k+1}$ can be also derived by considering the power series expansion of $\Delta(u)$ about $u=0$.
\end{proof}
\begin{example}
The leading terms in the power series expansion of $\Delta(u)$ are given by:
\beqa
\qquad \qquad \Delta_1&=& \cG_{1} + \tilde{{\cG}}_{1} -(q-q^ {-1})\big(\cW_0\cW_1+\cW_1\cW_0\big) ,\label{delta1}\\ 
\Delta_2&=&  \cG_{2} + \tilde{{\cG}}_{2} - \frac{(q^2-q^{-2})}{(q^2+q^{-2})}( q^{-1}\cW_0\cW_2 + q\cW_2\cW_0 + q^{-1}\cW_1\cW_{-1} + q\cW_{-1}\cW_{1} ) \,   \label{delta2}\\
&&\qquad\qquad + \frac{(q-q^{-1})}{(q^2+q^{-2})}\,\Big((q^2+q^{-2})(\cW_0^2+\cW_1^2)+\frac{\tilde{\cG_1}\cG_1 + \cG_1\tilde{\cG}_1}{\rho}\Big),\nonumber
\eeqa
\beqa
\Delta_3&=&  \cG_{3} + \tilde{{\cG}}_{3}  - \frac{(q-q^{-1})}{(q^2+q^{-2}-1)}( q^{-2}\cW_0\cW_3 + q^2\cW_{3}\cW_0 + q^{-2}\cW_1\cW_{-2} + q^2\cW_{-2}\cW_{1}) \label{delta3}\\
&&\qquad\qquad - \frac{(q-q^{-1})}{(q^2+q^{-2}-1)} (\cW_{2}\cW_{-1} + \cW_{-1}\cW_{2} )   \nonumber\\
&&\qquad\qquad + \frac{(q-q^{-1})}{(q^2+q^{-2}-1)}\,\Big((q^2+q^{-2})(\cW_0\cW_{-1}+\cW_{1}\cW_2)+\frac{\tilde{\cG_2}\cG_1 + \cG_2\tilde{\cG_1}}{\rho}\Big) \nonumber\\ 
&&\qquad\qquad - \frac{1}{(q+q^{-1})^2}\ \Big(\cG_{1} + \tilde{{\cG}}_{1} -(q-q^ {-1})\big(\cW_0\cW_1+\cW_1\cW_0\big)\Big).\nonumber 
\eeqa
\end{example}
\begin{rem}\label{rem3} For all  $k\in {\mathbb Z}_+$, 
$\Omega({\Delta}_{k+1})=\Delta_{k+1}$\ . 
\end{rem}

\vspace{1mm}

\section{A class of quotients of ${\cal A}_q$ and the polynomial formulae}
In this section, we investigate a class of quotients of ${\cal A}_q$. A quotient in this class is denoted $\tilde{\cal A}^{\{\delta\}}_q$ (see Definition \ref{tildeAq}). In $\tilde{\cal A}^{\{\delta\}}_q$,  we are going to show that any generator $\{{\cW}_{-k},{\cW}_{k+1},{\cG}_{k+1},{\tilde{\cG}}_{k+1}|k\in{\mathbb Z}_+\}$ is a certain polynomial of the two basic generators ${\cW}_{0},{\cW}_{1}$ only. The polynomial formulae are obtained explicitly (see Proposition \ref{prop1}) and are unique modulo the  relations (\ref{ec1})-(\ref{ec16}). \vspace{1mm}
\begin{defn}\label{tildeAq}  
Let $\delta_{k+1}$, $k\in{\mathbb Z}_+$ be fixed scalars in ${\mathbb K}$.
The algebra ${\tilde{\cal A}}^{\{\delta\}}_q$ is defined as the quotient of the algebra ${\cal A}_q$ by the ideal generated by the relations $\{\Delta_{k+1}=2\delta_{k+1}|\forall k\in {\mathbb Z}_+\}$.
\end{defn}

We start the analysis by considering the defining relations (\ref{ec1})-(\ref{ec16}). Using the power series expansion (\ref{c1}), (\ref{c2}), the defining relations of  ${\cal A}_q$ can be written explicitly in terms of the generators $\{{\cW}_{-k},{\cW}_{k+1},{\cG}_{k+1},{\tilde{\cG}}_{k+1}|k\in{\mathbb Z}_+\}$, see \cite[Definition 3.1]{BSh1}. For the analysis below, it will be  sufficient to stare at four of these relations. For all $k\in {\mathbb Z}_+$, they read:
\begin{align}
\big[{\cW}_0,{\cW}_{k+1}\big]=\big[{\cW}_{-k},{\cW}_{1}\big]=\frac{1}{(q+q^{-1})}\big({\tilde{\cG}_{k+1} } - {{\cG}_{k+1}}\big)\ ,\label{qo1}\\
\big[{\cW}_0,{\cG}_{k+1}\big]_q=
\rho{\cW}_{-k-1}-\rho{\cW}_{k+1}\ ,\label{qo2}\\
\big[{\cG}_{k+1},{\cW}_{1}\big]_q=
\rho{\cW}_{k+2}-\rho{\cW}_{-k}\ ,\label{qo3}\\
\big[{\cW}_{0},{\cW}_{-k}\big]=0\ ,\quad 
\big[{\cW}_{1},{\cW}_{k+1}\big]=0\ .\label{qo4}
\end{align}

We first describe the straightforward consequences of the conditions $\Delta_{k+1}=2\delta_{k+1}$ for $k=0,1$ in addition to the subset of relations (\ref{qo1})-(\ref{qo4}). Imposing $\Delta_1=2\delta_{1}$ in (\ref{delta1})  and  considering eqs. (\ref{qo1})  for $k=0$, the two equations read, respectively:
\beqa
\cG_{1} + \tilde{{\cG}}_{1} &-&(q-q^ {-1})\big(\cW_0\cW_1+\cW_1\cW_0\big) -2\delta_1=0,\nonumber\\
\cG_{1} - \tilde{{\cG}}_{1} &-& (q+q^ {-1})\big(\cW_1\cW_0-\cW_0\cW_1\big)=0.\nonumber
\eeqa
It implies
\beqa
 \cG_{1} = [\cW_1,\cW_0]_q +\delta_1\qquad  \mbox{and}\qquad \tilde{\cG}_1=[\cW_0,\cW_1]_q + \delta_1.\label{recG1}
\eeqa
%
Then, the r.h.s of  (\ref{qo2}), (\ref{qo3}) determine uniquely the next two generators in terms of $\cW_0,\cW_1,\cG_1$:
\beqa
{\cW}_{-1}&=&\frac{1}{\rho}\big[{\cW}_0,{\cG}_{1}\big]_q+{\cW}_{1}\ ,\qquad  {\cW}_{2}=\frac{1}{\rho}\big[{\cG}_{1},{\cW}_{1}\big]_q+{\cW}_{0}\ .\label{recW1}
\eeqa
Note that the first equality in  (\ref{qo1}) is satisfied for $k=1$. Indeed,  using (\ref{recG1}), (\ref{recW1}) one finds $\big[{\cW}_{0},{\cW}_{2}\big]=\big[{\cW}_{-1},{\cW}_{1}\big]$.
Consider now the first and second  eqs. of  (\ref{qo4}) for $k=1$. Respectively, they read\footnote{Note that inserting  (\ref{recG1}) into (\ref{WW}), one finds that $\cW_0,\cW_1$ satisfy the `$q-$Dolan-Grady relations':
\beqa
[\cW_0,[\cW_0,[\cW_0,\cW_1]_q]_{q^{-1}}]=\rho[\cW_0,\cW_1],\qquad
[\cW_1,[\cW_1,[\cW_1,\cW_0]_q]_{q^{-1}}]=\rho[\cW_1,\cW_0].
\label{qDGW} 
\eeqa
}
\beqa
\big[{\cW}_{0},{\cW}_{-1}\big]=0 ,\qquad \big[{\cW}_{1},{\cW}_{2}\big]=0. \label{WW}
\eeqa

According to eqs. (\ref{recG1}) (resp. (\ref{recW1})), we conclude that the generators ${\cG}_{1},\tilde{\cG}_{1}$ (resp. ${\cW}_{-1},{\cW}_{2}$) are uniquely determined as polynomials of total degree $2$ (resp. of degree less that $3$) in ${\cW}_{0},{\cW}_{1}$.

\vspace{1mm}

Next, we impose $\Delta_2=2\delta_2$.  Eqs.  (\ref{delta2}) and eqs. (\ref{qo1})  for $k=1$ become, respectively:
\beqa
 \cG_{2} + \tilde{{\cG}}_{2} &-& \frac{(q^2-q^{-2})}{(q^2+q^{-2})}\left( q^{-1}\cW_0\cW_2 + q\cW_2\cW_0 + q^{-1}\cW_1\cW_{-1} + q\cW_{-1}\cW_{1}\right) \,   \nonumber\\
\qquad\qquad &&+ \frac{(q-q^{-1})}{(q^2+q^{-2})}\,\Big((q^2+q^{-2})(\cW_0^2+\cW_1^2)+\frac{\tilde{\cG_1}\cG_1 + \cG_1\tilde{\cG}_1}{\rho}\Big) - 2\delta_2=0,\nonumber\\
\cG_{2} - \tilde{{\cG}}_{2} &-&(q+q^ {-1})\big(\cW_2\cW_0-\cW_0\cW_2\big)=0.\nonumber
\eeqa
It implies:
\beqa
\cG_2&=& \frac{(q^2-q^{-2})}{2(q^2+q^{-2})}\left( q^{-1}\cW_0\cW_2 + q\cW_2\cW_0 + q^{-1}\cW_1\cW_{-1} + q\cW_{-1}\cW_{1}\right)  + \frac{(q+q^{-1})}{2}\big[\cW_{2},\cW_{0}\big]    \label{recG2}\\
&& - \frac{(q-q^{-1})}{2(q^2+q^{-2})}\,\Big((q^2+q^{-2})(\cW_0^2+\cW_1^2)+\frac{\tilde{\cG_1}\cG_1 + \cG_1\tilde{\cG}_1}{\rho}\Big) + \delta_2, \nonumber
\eeqa
where the relation $\big[{\cW}_{0},{\cW}_{2}\big]=\big[{\cW}_{-1},{\cW}_{1}\big]$ has been used. Also, one finds:
\beqa
 \tilde{\cG}_2=\Omega(\cG_2).\nonumber
\eeqa
Then, eqs. (\ref{qo3}), (\ref{qo4})  determine uniquely the next two generators as polynomials of the  `lower' ones:
\beqa
{\cW}_{-2}&=&\frac{1}{\rho}\big[{\cW}_0,{\cG}_{2}\big]_q+{\cW}_{2}\ ,\qquad  {\cW}_{3}=\frac{1}{\rho}\big[{\cG}_{2},{\cW}_{1}\big]_q+{\cW}_{-1}\ .\label{recW2}
\eeqa
%
In addition, the first and second  eqs. of  (\ref{qo4}) for $k=2$ read:
\beqa
\big[{\cW}_{0},{\cW}_{-2}\big]=0 ,\qquad \big[{\cW}_{1},{\cW}_{3}\big]=0. \label{WW2}
\eeqa
Combining previous results, we conclude that, in $\tilde{\cal A}_q^{\{\delta\}}$, the generators ${\cG}_{2},\tilde{\cG}_{2}$ (resp. ${\cW}_{-2},{\cW}_{3}$) are uniquely determined as polynomials of total degree less than $4$ (resp. of degree less that $5$) in ${\cW}_{0},{\cW}_{1}$. Above analysis is generalized in a straightforward manner for $k\geq 2$:
\begin{prop}\label{prop1}   Let $i,j,k,l\in {\mathbb Z}_+$ and denote $\bar{k}=1$ (resp. $0$) for $k$ even (resp. odd)  and $\null[\frac{k}{2}]=\null \frac{k}{2}$ (resp. $\frac{k-1}{2}$)  for   $k$ even (resp. odd). One has:
\beqa
\cG_{k+1} &=& -\sum_{l=0}^{\null[\frac{k}{2}]}\sum_{i+j=2l+1-\bar{k}} \frac{f^{(k)}_{ij}}{2}{\mathbb W}_{ij}  - \sum_{l=0}^{\null[\frac{k}{2}]-\bar{k}}\sum_{i+j=2l+\bar{k}}  \frac{e^{(k)}_{ij}}{2}{\mathbb F}_{ij} -  \sum_{l=0}^{\null[\frac{k}{2}]-1}\frac{d^{(k)}_{l}}{2}{\mathbb G}_{2(l+1)-\bar{k}} \label{recGk}\\
 \qquad \qquad  && + \frac{(q+q^{-1})}{2}\big[\cW_{k+1},\cW_0\big] + \delta_{k+1}
\nonumber
\eeqa
where ${\mathbb W}_{ij} $, ${\mathbb G}_i$, ${\mathbb F}_{ij}$, $f^{(k)}_{ij}$, $e^{(k)}_{ij}$ and $d^{(k)}_{l}$ are given in Lemma  \ref{lemdetta} and
\beqa
\qquad {\cW}_{-k-1}\!\!\!&=&\frac{1}{\rho}\big[{\cW}_0,{\cG}_{k+1}\big]_q+{\cW}_{k+1}.\ \label{recWk}
\eeqa
The expressions for the other generators are given by ${\cW}_{k+1}=\Omega(\cW_{-k})$, $\tilde{\cG}_{k+1}=\Omega(\cG_{k+1})$.
\end{prop}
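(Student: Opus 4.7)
The plan is to prove both formulae by strong induction on $k$, using the base cases $k = 0$ and $k = 1$ that have already been established in the preceding text (equations \er{recG1}, \er{recW1}, \er{recG2}, \er{recW2}). The inductive hypothesis will assume that $\cG_{l+1}, \tilde\cG_{l+1}, \cW_{-l}, \cW_{l+1}$ have been expressed as polynomials in $\cW_0, \cW_1$ for all $l < k$, and also that $\cW_{-k}$ and $\cW_{k+1} = \Omega(\cW_{-k})$ are known at step $k$.

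The inductive step splits into two sub-steps. First, to produce $\cG_{k+1}$: impose the defining relation $\Delta_{k+1} = 2\delta_{k+1}$ of the quotient $\tilde{\cal A}_q^{\{\delta\}}$ (Definition \ref{tildeAq}), and substitute the explicit form of $\Delta_{k+1}$ from Lemma \ref{lemdetta}. Since $\mathbb{G}_{k+1} = \cG_{k+1} + \tilde\cG_{k+1}$ is the only ``top degree'' term in $\Delta_{k+1}$, one can solve:
\beqa
\cG_{k+1} + \tilde\cG_{k+1} = 2\delta_{k+1} - \sum_{l=0}^{\null[\frac{k}{2}]-1} d^{(k)}_l \mathbb{G}_{2(l+1)-\bar k} - \sum_{l=0}^{\null[\frac{k}{2}]} \sum_{i+j=2l+1-\bar k} f^{(k)}_{ij} \mathbb{W}_{ij} - \sum_{l=0}^{\null[\frac{k}{2}]-\bar k} \sum_{i+j=2l+\bar k} e^{(k)}_{ij}\mathbb{F}_{ij},\nonumber
\eeqa
where every term on the right is already known by induction. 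Combining this with the difference
\beqa
\cG_{k+1} - \tilde\cG_{k+1} = (q+q^{-1})[\cW_{k+1},\cW_0],\nonumber
\eeqa
obtained from relation \er{qo1} for the index $k$, and dividing the sum of these two identities by $2$, yields formula \er{recGk}. Formula \er{recWk} then follows at once from the relation \er{qo2} for the index $k$. The companion formulae $\tilde\cG_{k+1} = \Omega(\cG_{k+1})$ and $\cW_{k+2} = \Omega(\cW_{-k-1})$ are obtained by applying the automorphism $\Omega$ of Remark \ref{remomega}; this is consistent with $\Delta_{k+1} = 2\delta_{k+1}$ because $\Omega(\Delta_{k+1}) = \Delta_{k+1}$ by Remark \ref{rem3}.

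The main obstacle I anticipate is bookkeeping: the sums appearing in \er{recGk} must be shown to match precisely the sums appearing in Lemma \ref{lemdetta} after dividing by $2$, which requires that the coefficients $c_{k+1}$, $w_m^i$ and the ranges $[k/2]$, $\bar k$ all behave consistently when one solves for $\mathbb{G}_{k+1}$. A secondary delicate point is the first equality in \er{qo1}, namely $[\cW_0,\cW_{k+1}] = [\cW_{-k},\cW_1]$, which is needed to justify writing \er{qo1} as a single equation in $\cW_{k+1}$ (or in $\cW_{-k}$); this identity should either follow directly from relation \er{ec3} expanded via \er{c1} or be checked separately as part of the induction, as was done implicitly for $k=1$ following \er{recW1}. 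Once these consistency checks are in place, the recursion itself is entirely algebraic.
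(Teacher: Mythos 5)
Your proposal is correct and follows essentially the same route as the paper: the paper's proof is an induction in which $\Delta_{k+1}=2\delta_{k+1}$ (via Lemma \ref{lemdetta}) is combined with relation (\ref{qo1}) to isolate $\cG_{k+1}$, relation (\ref{qo2})--(\ref{qo3}) then gives $\cW_{-k-1}$, and $\Omega$ supplies the remaining generators. Your explicit sum/difference averaging of the two equations and your flagged consistency check $[\cW_0,\cW_{k+1}]=[\cW_{-k},\cW_1]$ are exactly the steps the paper carries out for $k=0,1$ before stating the general proposition.
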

\begin{proof} We use an inductive argument. Assume $\Delta_{l+1}=2\delta_{l+1}$ for $l\leq k$.
Then, (\ref{deltak}) with  (\ref{qo1}) determine uniquely - modulo the relations (\ref{ec1})-(\ref{ec16}) - the generators ${\cG}_{k+1}$  as polynomials of the generators of lower degree, given by (\ref{recGk}). From Remark  \ref{remomega}, one has $\tilde{\cG}_{k+1}=\Omega(\cG_{k+1})$.  Given ${\cW}_{-k},{\cW}_{k+1},{{\cG}}_{k+1}$ for $k$ fixed, the relations (\ref{qo2})-(\ref{qo3}) determine uniquely ${{\cW}}_{-k-1}$ as (\ref{recWk}) and ${{\cW}}_{k+2}=\Omega(\cW_{-k-1})$.
\end{proof}

Iterating the recursive formulae (\ref{recGk}), (\ref{recWk}) and using the automorphism $\Omega$, it follows that each generator of $\tilde{\cal A}_q^{\{\delta\}}$  is a polynomial in $\cW_0,\cW_1$: 
\begin{cor}\label{cor31} The algebra $\tilde{\cal A}_q^{\{\delta\}}$ is generated by $\cW_0, \cW_1$.
\end{cor}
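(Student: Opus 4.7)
The plan is to deduce the corollary directly from Proposition \ref{prop1} and the automorphism $\Omega$ of Remark \ref{remomega} by a straightforward strong induction on $n$, where the inductive claim $P(n)$ is that each of $\cW_{-n}$, $\cW_{n+1}$, $\cG_{n+1}$, $\tilde{\cG}_{n+1}$ lies in the subalgebra $\langle \cW_0,\cW_1\rangle$ of $\tilde{\cal A}_q^{\{\delta\}}$ generated by $\cW_0$ and $\cW_1$. The base case $n=0$ is immediate: $\cW_0,\cW_1$ are the generators themselves, while $\cG_1$ and $\tilde{\cG}_1$ are displayed as explicit quadratic polynomials in $\cW_0,\cW_1$ by \eqref{recG1}.

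For the inductive step at level $n\geq 1$, I would assume $P(m)$ for all $m<n$, so that in particular $\cG_n$ and $\cW_n$ already belong to $\langle \cW_0,\cW_1\rangle$, and then construct the four new generators in the following order. First, formula \eqref{recWk} of Proposition \ref{prop1} with $k=n-1$ gives
\[
\cW_{-n}=\tfrac{1}{\rho}[\cW_0,\cG_n]_q+\cW_n\in\langle \cW_0,\cW_1\rangle,
\]
after which $\cW_{n+1}=\Omega(\cW_{-n})$ also lies in $\langle \cW_0,\cW_1\rangle$, since by Remark \ref{remomega} the automorphism $\Omega$ exchanges $\cW_0\leftrightarrow \cW_1$ and therefore preserves the subalgebra they generate.

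Next, \eqref{recGk} with $k=n$ expresses $\cG_{n+1}$ as a polynomial in the data $\{\cW_{-j},\cW_{j+1}\}_{j\leq n}$, in the generator $\cW_{n+1}$ (entering through the term $\tfrac{q+q^{-1}}{2}[\cW_{n+1},\cW_0]$), and in the lower Cartan-type generators $\cG_{j+1},\tilde{\cG}_{j+1}$ with $j+1\leq n$; a direct inspection of the index ranges in the definitions of ${\mathbb W}_{ij},{\mathbb F}_{ij},{\mathbb G}_{i+1}$ in Lemma \ref{lemdetta} confirms that no index exceeds these bounds. Each of these ingredients lies in $\langle \cW_0,\cW_1\rangle$ either by the induction hypothesis or by the two sub-steps just performed, hence so does $\cG_{n+1}$, and then $\tilde{\cG}_{n+1}=\Omega(\cG_{n+1})$ for the same reason as $\cW_{n+1}$, completing $P(n)$.

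I do not expect a genuine obstacle, since all the hard combinatorial content has been absorbed into Proposition \ref{prop1} and $\Omega$. The only point requiring a little care is the internal ordering of the inductive step: the two new $\cW$'s at level $n$ must be produced first (from the lower-index $\cG_n$) before being fed into the formula for $\cG_{n+1}$, because \eqref{recGk} explicitly references $\cW_{n+1}$; trying to obtain $\cG_{n+1}$ from level-$(n{-}1)$ data alone would not suffice.
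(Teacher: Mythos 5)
Your proposal is correct and takes essentially the same route as the paper, which compresses the whole argument into the sentence preceding the corollary (``iterating the recursive formulae (\ref{recGk}), (\ref{recWk}) and using the automorphism $\Omega$''). Your explicit strong induction, with the careful ordering that produces $\cW_{-n},\cW_{n+1}$ from $\cG_n$ before feeding them into the formula for $\cG_{n+1}$, and the verification that the index ranges in Lemma \ref{lemdetta} do not exceed level $n$, is just a faithful unpacking of that iteration.
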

We now discuss more precisely how the original generators ${\cW}_{-k},{\cW}_{k+1},{{\cG}}_{k+1},{\tilde{\cG}}_{k+1}$  are related
to $\cW_0,\cW_1$. Let $\mathrm{d}$ denote the total degree of a monomial in the elements ${\cW}_0,{\cW}_1$. According to   (\ref{recG1}), ${\cG}_1,{\tilde{\cG}}_{1}$ are polynomials in the generators ${\cW}_0,{\cW}_1$ of degree 
$\mathrm{d}[{\cG}_1]=\mathrm{d}[{\tilde{\cG}}_{1}]= 2$. Then, from (\ref{recW1}) one finds $\mathrm{d}[{\cW}_{-1}]=\mathrm{d}[{\cW}_{2}]= 3$. By induction, according to Proposition \ref{prop1} it follows immediately: 
\begin{cor}\label{col} The generators of $\tilde{\cal A}_q^{\{\delta\}}$ are polynomials in ${\cW}_0,{\cW}_1$ of degree:
\beqa
\qquad \qquad \mathrm{d}[{\cW}_{-k}]=\mathrm{d}[{{\cW}}_{k+1}]= 2k+1 \qquad \mbox{and} \qquad \mathrm{d}[{\cG}_{k+1}]=\mathrm{d}[{\tilde{\cG}}_{k+1}]= 2k+2 \ , \qquad k\in{\mathbb Z}_+\ . \label{order}
\eeqa
\end{cor}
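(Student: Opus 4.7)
The plan is to proceed by induction on $k$, using the recursive formulae (\ref{recGk})--(\ref{recWk}) from Proposition \ref{prop1}. A preliminary observation halves the bookkeeping: the automorphism $\Omega$ of Remark \ref{remomega} swaps $\cW_0 \leftrightarrow \cW_1$ and hence preserves the total degree $\mathrm{d}$. Therefore it suffices to control $\cG_{k+1}$ and $\cW_{-k-1}$, since $\tilde\cG_{k+1} = \Omega(\cG_{k+1})$ and $\cW_{k+2} = \Omega(\cW_{-k-1})$ will automatically satisfy the same degree bounds.

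The base case $k = 0$ is essentially contained in the paragraphs above the statement: equation (\ref{recG1}) exhibits $\cG_1$ as a polynomial of degree $2$ in $\cW_0, \cW_1$, and equation (\ref{recW1}) then exhibits $\cW_{-1}$ as a polynomial of degree $1+2 = 3$. For the inductive step, assuming the claimed degrees hold for all indices $\leq k$, I would substitute the induction hypothesis into the right-hand side of (\ref{recGk}) and estimate term by term. The commutator $\tfrac{q+q^{-1}}{2}[\cW_{k+1},\cW_0]$ contributes degree $(2k+1)+1 = 2k+2$. The index bounds from Lemma~\ref{lemdetta} give: each ${\mathbb W}_{ij}$ satisfies $i+j \leq k$, hence $\mathrm{d}[{\mathbb W}_{ij}] \leq 2(i+j)+2 \leq 2k+2$; each ${\mathbb F}_{ij}$ has $i+j \leq k-1$, and the $\cG\tilde\cG$-part dominates to give $\mathrm{d}[{\mathbb F}_{ij}] \leq 2(i+j)+4 \leq 2k+2$; and each ${\mathbb G}_{2(l+1)-\bar k}$ has degree $\leq 2k-2$. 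Altogether $\mathrm{d}[\cG_{k+1}] \leq 2k+2$, and then (\ref{recWk}) yields $\mathrm{d}[\cW_{-k-1}] \leq 1 + (2k+2) = 2(k+1)+1$, which closes the induction.

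The one point that deserves genuine attention is the equality, rather than a mere upper bound, in the stated degrees. For this I would track the leading monomial: by induction $\cW_{k+1}$ has a nonvanishing top-degree part of degree exactly $2k+1$, and the commutator $[\cW_{k+1},\cW_0]$ in (\ref{recGk}) produces a fresh top-degree contribution in $\cW_0,\cW_1$ of degree $2k+2$ that cannot be cancelled by the remaining sums, since their leading monomials come from strictly lower-indexed generators and therefore live in a proper subspace. This non-cancellation is the main obstacle to a completely mechanical proof, but it is forced by the iterated structure of the $q-$Dolan-Grady relations (\ref{qDGW}) governing how $\ad(\cW_0)$ and $\ad(\cW_1)$ act on the leading monomials.
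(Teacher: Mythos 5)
Your proof follows the same route as the paper: the base case from (\ref{recG1})--(\ref{recW1}) followed by an induction on the recursive formulae of Proposition \ref{prop1}, which is precisely the paper's argument (the paper simply asserts that the corollary ``follows immediately'' by induction, without writing out the term-by-term degree count you supply, and your index bounds $i+j\le k$ for ${\mathbb W}_{ij}$, $i+j\le k-1$ for ${\mathbb F}_{ij}$ and degree $\le 2k-2$ for the ${\mathbb G}$-terms are all correct). On the one delicate point --- equality of degrees rather than a mere upper bound --- you are more explicit than the source, which does not address non-cancellation of the top-degree part at all; note only that your stated reason (``leading monomials come from strictly lower-indexed generators'') is not quite accurate, since the ${\mathbb W}_{ij}$ terms with $i+j=k$ and the $\cG\tilde\cG$-parts of ${\mathbb F}_{ij}$ with $i+j=k-1$ also reach degree $2k+2$, so a genuinely rigorous equality argument would have to rule out cancellation among all of these.
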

\begin{example}\label{ex1} The first generators read:
\beqa
\quad && \ {\cG}_{1} = q{\cW_1}\cW_0-q^{-1}{\cW_0}\cW_1 + \delta_1 ,\label{defel}\\
&&{\cW}_{-1} = \frac{1}{\rho}\left( (q^2+q^{-2})\cW_0\cW_1\cW_0 -\cW_0^2\cW_1 - \cW_1 \cW_0^2\right) + \cW_1 + \frac{\delta_1(q-q^{-1})}{\rho}\cW_0,\nonumber\\
&&{\cG}_{2} = \frac{1}{\rho(q^2+q^{-2})} \Big( (q^{-3}+q^{-1}) \cW_0^2{\cW_1}^2 - (q^{3}+q){\cW_1}^2\cW_0^2 + (q^{-3}-q^{3})(\cW_0{\cW_1}^2\cW_0 + {\cW_1}\cW_0^2{\cW_1})  \nonumber\\
&&\qquad \qquad  - (q^{-5}+q^{-3} +2q^{-1}) \cW_0{\cW_1}\cW_0{\cW_1} + (q^{5}+q^{3} +2q){\cW_1}\cW_0{\cW_1}\cW_0 +  \rho(q-q^{-1})(\cW_0^2 + {\cW_1}^2
)\Big) \nonumber\\
&&\qquad \qquad +\ \frac{\delta_1(q-q^{-1})}{\rho}\big(q{\cW_1}\cW_0-q^{-1}{\cW_0}\cW_1 \big)  + \delta_2- \frac{\delta_1^2(q-q^{-1})}{\rho(q^2+q^{-2})} .\nonumber 
\eeqa
Expressions of ${\cW}_{k+1}$ and ${\tilde{\cG}}_{k+1}$ are obtained using the automorphism $\Omega$.
\end{example}

For simplicity, the explicit expressions for the generators ${\cal W}_{-2},{\cal W}_{3},{\cal G}_{3},\tilde{\cal G}_3$ are reported in Appendix \ref{ApA}.

\begin{rem}
In \cite[Example 1]{BB0}, under certain assumptions polynomial formulae for the generators of ${\cal A}_q$ were obtained as the unique solutions of a system of linear equations (see \cite[Appendix]{BB0}), through a detailed analysis of the defining relations (\ref{ec1})-(\ref{ec16}). For $k=0,1,2$, we have checked that the polynomial formulae conjectured in  \cite{BB0} 
 coincide with the polynomial formulae  (\ref{recGk}) that holds in $\tilde{\cal A}_q^{\{\delta\}}$.
\end{rem} 
 
\vspace{1mm}

\section{Linear bases for the algebras $\tilde{\cal A}_q^{\{\delta\}}$ and ${\cal O}_q$}
In this section, a linear basis for the algebra $\tilde{\cal A}_q^{\{\delta\}}$ is conjectured (Conjecture \ref{conj1}). We call it  the $WG-$basis. Then, using the fact that the algebra $\tilde{\cal A}_q^{\{\delta\}}$ is a homomorphic image of the $q-$Onsager algebra ${\cal O}_q$, it is conjectured that the $WG-$basis  gives a basis for the $q-$Onsager algebra. A comparison between the $WG-$basis and the `zig-zag' basis of the $q-$Onsager algebra given by Ito-Terwilliger \cite{IT10} gives a support for the conjecture. \vspace{1mm}

First, we exhibit a natural spanning set for  the algebra $\tilde{\cal A}_q^{\{\delta\}}$. From the  relations (\ref{ec1}) and (\ref{ec9}) observe that $\{\cW_{-k}|k\in{\mathbb Z}_+\}$ (resp. $\{\cW_{k+1}|k\in{\mathbb Z}_+\}$, $\{\cG_{k+1}|k\in{\mathbb Z}_+\}$) generate a commutative subalgebra of $\tilde{\cal A}_q^{\{\delta\}}$.   Let $\{\alpha_i,\beta_i,\gamma_i,k_i,l_i,p_i\}\in{\mathbb Z}_+$. Consider monomials of the form:
\beqa
 \omega'_{\alpha,\beta,\gamma}(\{ \cW_{-k},\cW_{k+1},\cG_{k+1}|k\in{\mathbb Z}_+ \}) = 
\cW^{\alpha_1}_{-k_1}...\cW^{\alpha_N}_{-k_N}    \cG^{\beta_1}_{p_1+1}...\cG^{\beta_P}_{p_P+1}    \cW^{\gamma_M}_{l_M+1}...\cW^{\gamma_1}_{l_1+1}.\label{mon}
\eeqa
According to Proposition \ref{prop1} and Collorary \ref{col}, the monomial (\ref{mon}) is a polynomial in $\cW_0,\cW_1$ of maximum degree: 
\beqa
|\lambda'| = \sum_{i=1}^N\alpha_i (2k_i+1)+ \sum_{i=1}^P\beta_i (2p_i+2)+ \sum_{i=1}^M\gamma_i(2l_i+1).\label{degWG}
\eeqa

For small values of $|\lambda'|$, using the relations  (\ref{ec1})-(\ref{ec16}) it is 
 straightforward to identify\footnote{Note that a more explicit form for the commutation relations between the generators $\{\cW_{-k},\cW_{k+1},\cG_{k+1}\}$ has been used for this task, see \cite[Definition 3.1 and Proposition 3.1]{BSh1}.} the set of linearly independent  monomials of the form (\ref{mon}).

\begin{example}\label{Ex3}  Let $d_{|\lambda'|}$ denote the number of irreducible monomials of maximum degree $|\lambda'|$.  The irreducible monomials in $\tilde{\cal A}_q^{\{\delta\}}$ of degree $|\lambda'|\leq 6$ are the set:\\

\begin{center}
\begin{tabular}{|c|c| c | c | c|}
\hline
$|\lambda'|$ &  $WG-$basis  & $d_{|\lambda'|}$ \\
\hline 
$0$ &  1 & 1 
\\
$1$ &  $\cW_0$,\ $\cW_1$ &   2 
\\
$2$ &  $\cW_0^2$, $\cW_1^2$, $\cW_0\cW_1$, $\cG_1$   & 4 
\\
$3$ &  $\cW_0^3$, $\cW_1^3$, $\cW_0^2\cW_1$, $\cW_0\cW_1^2$, $\cW_0\cG_1$, $\cG_1\cW_1$, $\cW_{-1}$, $\cW_2$    & 8 
 \\

$4$ & $\cW_0^4$, $\cW_1^4$, $\cW_0^3\cW_1$, $\cW_0^2\cW_1^2$, $\cW_0\cW_1^3$, $\cW_0^2\cG_1$, $\cG_1\cW_1^2$, $\cW_0\cW_{-1}$, $\cW_2\cW_1$,
  & 14  \\
&  $\cW_0\cG_1\cW_1$, $\cW_{-1}\cW_1$, $\cW_0\cW_2$, $\cG_1^2$, $\cG_2$   &   \\
& & \\

$5$ & $\cW_0^5$, $\cW_0^4\cW_1$, $\cW_0^3\cW_1^2$, $\cW_0^2\cW_1^3$, $\cW_0^2\cG_1\cW_1$,  $\cW_{0}\cW_1^4$, $\cW_0\cG_1\cW_{1}^2$, $\cW_0\cG_1^2$ , $\cW_0\cG_2$, $\cW_{-2}$, & 24 \\
&  $\cW_{-1}\cG_{1}$,  $\cW_0\cW_2\cW_{1}$, $\cW_0\cW_{-1}\cW_1$,  $\cW_3$, 
   $\cW_0^2\cW_2$, $\cW_2\cW_1^2$,  $\cW_{-1}\cW_{1}^2$, $\cW_0^2\cW_{-1}$, & \\
& $\cW_1^5$, $\cG_1\cW_2$, $\cW_0^3\cG_1$, $\cG_1^2\cW_1$, $\cG_1\cW_2$, $\cG_2\cW_1$  &  \\

& & \\

$6$ &     $\cW_0^6$, $\cW_0^5\cW_1$, $\cW_0^4\cW_1^2$,  $\cW_0^4\cG_1$ , $\cW_0^3\cW_1^3$,  $\cW_0^3\cG_1\cW_1$, 
$\cW_0^3\cW_{-1}$, $\cW_0^3\cW_2$, $\cW_0^2\cW_1^4$,   $\cW_0^2\cG_1\cW_1^2$,  &  40 \\
&  $\cW_0^2\cW_{-1}\cW_1$,    $\cW_0^2\cW_2\cW_1$,  $\cW_0^2\cG_1^2$, $\cW_0^2\cG_2$, $\cW_0\cW_1^5$, $\cW_0\cG_1\cW_1^3$, $\cW_0\cW_{-1}\cW_1^2$,  & \\
& $\cW_0\cW_2\cW_1^2$, $\cW_0\cG_1^2\cW_1$, $\cW_0\cG_2\cW_1$, $\cW_0\cW_{-2}$, $\cW_0\cW_3$, $\cW_0\cW_{-1}\cG_1$, $\cW_0\cG_1\cW_2$&   \\
&  $\cG_1\cW_1^4$, $\cW_{-1}\cW_1^3$, $\cW_2\cW_1^3$ , $\cG_1^2\cW_1^2$, $\cG_2\cW_1^2$,  $\cW_1^6$, $\cW_{-2}\cW_1$, $\cW_3\cW_1$, $\cW_{-1}\cG_1\cW_1$, $\cG_1\cW_2\cW_1$,   & \\
&  $\cG_1^3$, $\cG_1\cG_2$, $\cG_3$, $\cW_{-1}^2$, $\cW_2^2$, $\cW_{-1}\cW_2$ &  \\

\hline
\end{tabular}
        \end{center}
\end{example}
\vspace{3mm}

This gives a support for the following conjecture:
\begin{conj}\label{conj1} Let $\{\alpha_i,\beta_i,\gamma_i,k_i,l_i,p_i\}\in{\mathbb Z}_+$. Define the ordering
\beqa
 k_1<...<k_N;\quad  p_1<...<p_P; \quad \l_1<...<l_M. \nonumber
\eeqa
Vectors of the form
\beqa
\cW^{\alpha_1}_{-k_1}...\cW^{\alpha_N}_{-k_N}    \cG^{\beta_1}_{p_1+1}...\cG^{\beta_P}_{p_P+1}    \cW^{\gamma_M}_{l_M+1}...\cW^{\gamma_1}_{l_1+1} \label{basis1} 
\eeqa
induce a basis of type Poincar\'e-Birkhoff-Witt of $\tilde{\cal A}_q^{\{\delta\}}$. We refer to this basis as the $WG-$basis.
\end{conj}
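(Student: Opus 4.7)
The plan is to establish both spanning and linear independence for the ordered monomials (\ref{basis1}). For spanning, I would first eliminate the generators $\tilde{\cG}_{k+1}$: by (\ref{deltak}) in Lemma \ref{lemdetta}, the quotient relation $\Delta_{k+1} = 2\delta_{k+1}$ expresses $\cG_{k+1} + \tilde{\cG}_{k+1}$ as a polynomial in generators whose total degree in $\cW_0, \cW_1$ is strictly less than $\mathrm{d}[\tilde{\cG}_{k+1}]$; downward induction on $k$ therefore eliminates every $\tilde{\cG}_{k+1}$ in any word. It then remains to sort the three surviving families $\{\cW_{-k}\}$, $\{\cG_{k+1}\}$, $\{\cW_{k+1}\}$ into the prescribed block form with increasing indices within each block. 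The internal commutations $[\cW_{-k}, \cW_{-l}] = [\cW_{k+1}, \cW_{l+1}] = [\cG_{k+1}, \cG_{l+1}] = 0$ follow from (\ref{ec1}) and (\ref{ec9}) expanded in powers of $U^{-1}$. The crossing commutators $[\cW_{-k}, \cG_{l+1}]$, $[\cG_{k+1}, \cW_{l+1}]$, and $[\cW_{-k}, \cW_{l+1}]$ do not vanish, but by Corollary \ref{col} they yield elements of strictly lower total degree in $\cW_0, \cW_1$. An induction on this degree filtration then closes spanning.

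For linear independence, I would exploit the surjective homomorphism $\tilde{\cal A}_q^{\{\delta\}} \twoheadrightarrow \cO_q$ of Proposition \ref{propmap} together with the Ito--Terwilliger zig-zag basis of $\cO_q$ \cite{IT10}. Using Proposition \ref{prop1}, each $WG$-monomial maps to a polynomial in $\cW_0, \cW_1$ of controlled total degree $|\lambda'|$ given by (\ref{degWG}), which can be expanded in the zig-zag basis. The goal is to check that, when $WG$-monomials are ordered by $|\lambda'|$ and some natural secondary ordering, the resulting change-of-basis matrix into the zig-zag basis is upper-triangular with non-zero diagonal, so that each $WG$-monomial produces a distinct leading zig-zag monomial. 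Matching the counts $d_{|\lambda'|}$ of Example \ref{Ex3} with the dimensions of the corresponding degree-$|\lambda'|$ piece of $\cO_q$ in the zig-zag basis would then give the required bijection, hence linear independence in $\tilde{\cal A}_q^{\{\delta\}}$ itself.

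The main obstacle is proving this triangularity rigorously. Identifying the leading zig-zag term of a general $WG$-element requires tracking, through the iterated recursions (\ref{recGk})--(\ref{recWk}), which top-degree monomial in $\cW_0, \cW_1$ survives at each step; the nonlinear piece $\frac{q+q^{-1}}{2}[\cW_{k+1}, \cW_0]$, together with the quadratic terms weighted by $e^{(k)}_{ij}, f^{(k)}_{ij}, d^{(k)}_l$, could in principle produce cancellations or leading-term collisions between distinct $WG$-monomials. A potentially cleaner alternative worth pursuing is to construct a faithful representation of $\tilde{\cal A}_q^{\{\delta\}}$ --- for instance via the coideal subalgebra realization of \cite{BB} inside $U_q(\widehat{sl_2})$, where the currents $\cW_\pm(u), \cG_\pm(u)$ have explicit expressions --- and to verify linear independence of the $WG$-monomials directly on concrete modules, thereby sidestepping the combinatorial comparison with the zig-zag basis.
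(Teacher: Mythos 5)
First, a point of comparison: the paper does not prove this statement --- it is stated as Conjecture \ref{conj1} and is left open. The only support offered is the explicit list of irreducible monomials up to degree $6$ (Example \ref{Ex3}), the coincidence of the generating function (\ref{genWG}) with the zig-zag generating function (\ref{genzz}), and the invertibility of the transition matrix up to degree $4$ (Appendix \ref{ApB}). So there is no proof in the paper to measure your proposal against; what matters is whether your plan could close the gap the authors left open. Your spanning argument is reasonable in outline and is essentially the straightening procedure the paper uses implicitly: eliminate $\tilde{\cG}_{k+1}$ via $\Delta_{k+1}=2\delta_{k+1}$, use (\ref{ec1}) and (\ref{ec9}) for the internal commutativity of each block, and reorder across blocks modulo the degree filtration. (Two small caveats: the terms ${\mathbb W}_{ij}$ with $i+j=k$ in (\ref{deltak}) have the \emph{same} top degree $2k+2$ as $\tilde{\cG}_{k+1}$, not strictly lower, so the elimination works at the level of generators rather than of degree; and the cross-commutators only drop degree in the associated graded sense, which still suffices for spanning but needs to be checked from the mode expansions of (\ref{ec4})--(\ref{ec7}).)

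The genuine gap is in the linear-independence step, and it is a gap of logic, not merely of computation. You write the surjection as $\tilde{\cal A}_q^{\{\delta\}}\twoheadrightarrow\cO_q$, but Proposition \ref{propmap} goes the other way: $\Psi:\cO_q\to\tilde{\cal A}_q^{\{\delta\}}$ is surjective. A surjection in that direction cannot transport linear independence forward: even if your triangularity argument against the zig-zag basis of \cite{IT10} succeeds perfectly, it only shows that the preimage polynomials $P_\mu({\textsf A},{\textsf A}^*)$ are linearly independent (indeed a basis) in $\cO_q$; their images $\Psi(P_\mu)$, which are the $WG$-monomials, could still satisfy relations lying in $\ker\Psi$. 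Concluding independence in $\tilde{\cal A}_q^{\{\delta\}}$ requires $\ker\Psi=0$, which is exactly Conjecture \ref{conj2} and is equally open --- so the main route you propose is circular. All the dimension inequalities available from spanning plus surjectivity point the same (wrong) way; what is missing is a \emph{lower} bound on the graded dimensions of $\tilde{\cal A}_q^{\{\delta\}}$. Hence the ``cleaner alternative'' you mention at the end --- a faithful representation of $\tilde{\cal A}_q^{\{\delta\}}$ on which the $WG$-monomials can be seen to be independent, or equivalently a diamond-lemma resolution of all ambiguities among the relations (\ref{ec1})--(\ref{ec16}) in the quotient --- is not an optional shortcut but the essential missing ingredient of any proof.
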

\vspace{1mm}

Consider the subset of irreducible monomials of the form (\ref{basis1}) for $|\lambda'|$ fixed. In the sum (\ref{degWG}), observe that the elements $\{\cW_{-k_i}\}$ contribute by odd integers, the elements $\{\cG_{p_i}\}$ by even integers and  the elements $\{\cW_{l_i+1}\}$ by odd integers. So, $d_{|\lambda'|}$ coincides with the number of partition of the integer $|\lambda'|$ where there are two kinds of odd parts. In the literature, this number is sometimes referred as the `number of partitions into red integers and blue odd integers'. The corresponding generating function is known, see for instance e.g. \cite[page 88]{GI}. It follows:

\begin{thm}  The generating function of the number of irreducible monomials in $\tilde{\cal A}_q^{\{\delta\}}$ is:
\beqa
\sum_{|\lambda'|=0}^{\infty} d_{|\lambda'|} v^{|\lambda'|}=\prod_{m=1}^{\infty}\frac{1+v^{m}}{1-v^m}.\label{genWG}
\eeqa 
\end{thm}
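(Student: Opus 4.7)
The plan is to leverage Conjecture~\ref{conj1} and Corollary~\ref{col}, reducing the statement to a standard partition identity. First, I would identify each irreducible monomial of the form (\ref{basis1}) with a triple of partitions $(\mu,\nu,\sigma)$: under the ordering $k_1<\cdots<k_N$, the data $\{(k_i,\alpha_i)\}$ codes a partition $\mu$ whose parts $2k_i+1$ are odd, the data $\{(p_i,\beta_i)\}$ codes a partition $\nu$ whose parts $2p_i+2$ are even, and the data $\{(l_i,\gamma_i)\}$ codes a second odd-part partition $\sigma$. By Corollary~\ref{col}, the total degree satisfies $|\lambda'|=|\mu|+|\nu|+|\sigma|$, so the counting decouples into three independent partition problems (odd / even / odd).

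Second, I would factor the generating function as a product over the three commuting subalgebras generated respectively by $\{\cW_{-k}\}$, $\{\cG_{k+1}\}$ and $\{\cW_{k+1}\}$:
\beqa
\sum_{|\lambda'|=0}^{\infty}d_{|\lambda'|}v^{|\lambda'|}
=\prod_{k=0}^{\infty}\frac{1}{1-v^{2k+1}}\cdot\prod_{k=0}^{\infty}\frac{1}{1-v^{2k+2}}\cdot\prod_{k=0}^{\infty}\frac{1}{1-v^{2k+1}}
=\prod_{m=1}^{\infty}\frac{1}{(1-v^{2m-1})^2(1-v^{2m})}.\nonumber
\eeqa

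Third, I would apply Euler's classical identity $\prod_{m\geq 1}(1+v^m)=\prod_{m\geq 1}(1-v^{2m-1})^{-1}$ (partitions into distinct parts $=$ partitions into odd parts) to rewrite the right-hand side of (\ref{genWG}) as
\beqa
\prod_{m=1}^{\infty}\frac{1+v^m}{1-v^m}
=\left(\prod_{m=1}^{\infty}\frac{1}{1-v^{2m-1}}\right)\left(\prod_{m=1}^{\infty}\frac{1}{1-v^m}\right)
=\prod_{m=1}^{\infty}\frac{1}{(1-v^{2m-1})^2(1-v^{2m})},\nonumber
\eeqa
which matches the previous expression and completes the argument.

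Given Conjecture~\ref{conj1}, no step here is a real obstacle: the only content is the bookkeeping observation that odd parts enter with two colors (one from each family of $\cW$-generators) while even parts enter with one color (from $\cG$), and the Euler identity then converts the result to the quoted form. The genuinely hard part, which this theorem sidesteps, is establishing linear independence of the monomials (\ref{basis1}) — that is, Conjecture~\ref{conj1} itself. Once that is granted, the generating function identity is immediate.
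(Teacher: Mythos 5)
Your proposal is correct and follows essentially the same route as the paper: both use the degree formula (\ref{degWG}) to identify $d_{|\lambda'|}$ with the number of partitions of $|\lambda'|$ in which odd parts carry two colours (one for each family of $\cW$-generators) and even parts one colour (from the $\cG$-generators), modulo the linear independence asserted in Conjecture \ref{conj1}. The only cosmetic difference is that you derive the resulting product $\prod_m(1-v^{2m-1})^{-2}(1-v^{2m})^{-1}=\prod_m(1+v^m)/(1-v^m)$ from Euler's identity, whereas the paper simply cites the known generating function for ``partitions into red integers and blue odd integers.''
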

\vspace{2mm}

We now introduce the $q-$Onsager algebra denoted ${\cal O}_q$.
\begin{defn}[\cite{Ter03,B1}]\label{qons} Let $\rho$ be a nonzero scalar in ${\mathbb K}$.
The $q-$Onsager algebra is the associative algebra with unit and standard generators $\textsf{A},\textsf{A}^*$ subject to the so-called $q-$Dolan-Grady relations relations
\beqa
[\textsf{A},[\textsf{A},[\textsf{A},\textsf{A}^*]_q]_{q^{-1}}]=\rho[\textsf{A},\textsf{A}^*]\
,\qquad
[\textsf{A}^*,[\textsf{A}^*,[\textsf{A}^*,\textsf{A}]_q]_{q^{-1}}]=\rho[\textsf{A}^*,\textsf{A}]\
\label{qDG} . \eeqa
\end{defn}

A linear basis for the $q-$Onsager algebra ${\cal O}_q$ has been constructed in  \cite{IT10}. It is the so-called `zig-zag' basis. Let $r$ denote a positive integer. Define $ \Lambda_r = \{\lambda=(\lambda_0, \lambda_1,\dots, \lambda_r) \in \mathbb{Z}^{r+1}|\lambda_0\ge 0, \lambda_i \ge 1 (1 \le i \le r)\}$.
%
 If there exists an integer $i~ (0 \le i\le r)$ such that 
 \[\lambda_0 < \lambda_1 < \dots <\lambda_i \ge \lambda_{i+1}\ge \dots \ge \lambda_{r},\]
 then $\lambda=(\lambda_0, \lambda_1, \dots, \lambda_r)$ is said to be irreducible. The sum $|\lambda|=\lambda_0+ \lambda_1, \dots+ \lambda_r$ is called the length of $\lambda$.
The following set is the zig-zag basis of the $q-$Onsager algebra  ${\cal O}_q$ as a ${\mathbb K}-$vector space \cite[Theorem 2.1]{IT10} (see \cite[Problem 3.4]{IT04}):
\beqa
 \{\omega_\lambda({\textsf A},{\textsf A}^*)|\ \lambda\  \text{is irreducible}\} \quad \mbox{where}\quad 
 \omega_\lambda({\textsf A},{\textsf A}^*)=\left\{\begin{array}{c}
 {\textsf A}^{\lambda_0}{{\textsf A}^*}^{\lambda_1}\dots {\textsf A}^{\lambda_r}~~~~ \text{ if}~ r~ \text{is even}\\
 {\textsf A}^{\lambda_0}{{\textsf A}^*}^{\lambda_1}\dots {{\textsf A}^*}^{\lambda_r}~~~~\text{if}~ r~ \text{is odd}
 \end{array}.\right.\label{zigzag}
\eeqa
%


We now review some results of \cite[Section 4]{T} that will be useful in the discussion below. For $0\leq |\lambda|<\infty$, let ${\cal O}_{|\lambda|}$ denote the subspace of ${\cal O}_q$ spanned by the words of length at most $|\lambda|$ in the generators ${\textsf A},{\textsf A}^*$.  The sequence $\{{\cal O}_{|\lambda|}\}_{|\lambda|\in {\mathbb Z}_+}$ is a filtration of ${\cal O}_q$. For $n\in {\mathbb Z}_+ $, consider the quotient ${\mathbb K}-$vector space $\overline{\cal O}_{|\lambda|}={\cal O}_{|\lambda|}/{\cal O}_{|\lambda|-1}$ and introduce the formal direct sum $\overline{\cal O}=\sum_{|\lambda|\in {\mathbb Z}_+}\overline{\cal O}_{|\lambda|}$. The sequence $\{\overline{\cal O}_{|\lambda|}\}_{|\lambda|\in {\mathbb Z}_+}$ is a ${\mathbb Z}_+-$grading of the algebra $\overline{\cal O}$. The algebra $\overline{\cal O}$ is called the graded algebra associated with the filtration $\{{\cal O}_{|\lambda|}\}_{|\lambda|\in {\mathbb Z}_+}$. By construction, one has:
\beqa  
\mathrm{dim}({\cal O}_{|\lambda|})=\mathrm{dim}(\overline{\cal O}_{|\lambda|}).\label{eqdim}
\eeqa

Let $U_q^+$ be the positive part of $U_q(\widehat{sl_2})$. By \cite[Theorem 4.4]{T}, recall that the algebra $\overline{\cal O}$ is isomorphic  to $U_q^+$. It follows that the generating function of $\mathrm{dim}(\overline{\cal O}_{|\lambda|})$ is equal to the formal character of the Verma module for $\widehat{sl_2}$ \cite[eq. (40)]{IT04}. From (\ref{eqdim}), for the $q-$Onsager algebra it follows\footnote{See also \cite[Note 4.7]{T}.}:
\begin{thm} The generating function of the number of irreducible monomials in ${\cal O}_q$ is:
\beqa
\sum_{|\lambda|=0}^{\infty} \mathrm{dim}({\cal O}_{|\lambda|}) v^{|\lambda|}=\prod_{m=1}^{\infty}(1-v^{2m})^{-1}(1-v^{2m-1})^{-2}.\label{genzz}
\eeqa
\end{thm}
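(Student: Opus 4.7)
The plan is to reduce \er{genzz} to a Hilbert series computation for the positive part $U_q^+$ of $U_q(\asltwo)$. The essential input, already recorded in the excerpt, is the isomorphism of graded algebras $\overline{\cal O}\isom U_q^+$ from \cite[Theorem 4.4]{T}. First I would transport the length grading on $\overline{\cal O}$ (in which both $\textsf{A}$ and $\textsf{A}^*$ have degree one) across this isomorphism to the principal $\Z_+$-grading on $U_q^+$, under which the Chevalley generators $e_0,e_1$ also have degree one. Combined with \er{eqdim}, this reduces the statement to computing the Hilbert series of $U_q^+$ with respect to the principal grading.

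Second, I would invoke the Poincar\'e-Birkhoff-Witt basis of $U_q^+(\asltwo)$ built from the real root vectors $E_{\alpha_i+n\delta}$ ($i\in\{0,1\}$, $n\ge 0$) and the imaginary root vectors $E_{n\delta}$ ($n\ge 1$), the imaginary roots each occurring with multiplicity one. Under the principal grading $\delta=\alpha_0+\alpha_1$ has degree $2$, so each real root $\alpha_i+n\delta$ contributes a generator of degree $2n+1$ and each imaginary root $n\delta$ contributes a generator of degree $2n$. Multiplying the corresponding geometric series gives
\beqa
\sum_{|\lambda|\ge 0} \dim(\overline{\cal O}_{|\lambda|})\, v^{|\lambda|} &=& \prod_{n\ge 0}(1-v^{2n+1})^{-2}\prod_{n\ge 1}(1-v^{2n})^{-1},\nonumber
\eeqa
which, after the reindexing $m=n+1$ in the first product, is precisely the right-hand side of \er{genzz}. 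Equivalently, as already pointed out in the excerpt, this is the character formula for the Verma module of $\asltwo$ specialized to the principal grading, cf.\ \cite[eq.~(40)]{IT04}.

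The only point that genuinely requires care is verifying that the isomorphism $\overline{\cal O}\isom U_q^+$ of \cite[Theorem 4.4]{T} is compatible with the two $\Z_+$-gradings in the manner used above, namely word length in $\textsf{A},\textsf{A}^*$ on one side and the principal grading on the other. Once this compatibility is in place, the remainder is a purely combinatorial character computation: read off the positive root data of $\asltwo$ (two series of real roots, and a single series of imaginary roots of multiplicity one) and multiply out the resulting infinite product.
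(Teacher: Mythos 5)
Your argument is correct and follows essentially the same route as the paper: both rest on the length filtration and its associated graded algebra, the identity \er{eqdim}, and the isomorphism $\overline{\cal O}\cong U_q^+$ of \cite[Theorem 4.4]{T}. The only difference is cosmetic: where the paper simply cites the formal character of the Verma module for $\widehat{sl_2}$ from \cite[eq. (40)]{IT04}, you rederive that same infinite product by reading off the affine root multiplicities (two families of real roots in odd principal degree, one family of imaginary roots in even degree) from a PBW basis of $U_q^+$.
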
 
\vspace{1mm}

\begin{example}\label{Ex4}  The irreducible monomials in ${\cal O}_q$ of length $|\lambda|\leq 6$ are the set:\\
\vspace{1mm}

\begin{center}
\begin{tabular}{|c |c| c | c | c|}
\hline
$|\lambda|$ &  Zig-zag basis  &  $\mathrm{dim}({\cal O}_{|\lambda|})$\\
\hline 
 $0$ &  1 & 1 
\\
 $1$ &  ${\textsf A}$, ${\textsf A}^*$ &   2 
\\
 $2$ &  ${\textsf A}^2$, ${{\textsf A}^*}^2$, ${\textsf A}{\textsf A}^*$, ${\textsf A}^*{\textsf A}$  & 4 
\\
 $3$ & ${\textsf A}^3$, ${{\textsf A}^*}^3$, ${\textsf A}^2{{\textsf A}}^*$, ${\textsf A}{{\textsf A}^*}^2$, ${{\textsf A}^*}^2{\textsf A}$, ${{\textsf A}^*}{\textsf A}^2$,   ${\textsf A}{{\textsf A}^*}{\textsf A}$, ${{\textsf A}^*}{\textsf A}{{\textsf A}^*}$ & 8 
 \\

 $4$ & ${\textsf A}^4$, ${\textsf A}^3{\textsf A}^*$, ${\textsf A}^2{\textsf A}^*{\textsf A}$, ${\textsf A}^*{\textsf A}^3$, ${\textsf A}^2{{\textsf A}^*}^2$, ${\textsf A}{{\textsf A}^*}^2{\textsf A}$, ${\textsf A}{{\textsf A}^*}{\textsf A}{{\textsf A}^*}$, & 14  \\
 &   + ${\textsf A}\leftrightarrow {\textsf A}^*$ &  \\

 & &   \\

 $5$ &  
 ${\textsf A}^5$, ${\textsf A}^4{\textsf A}^*$, ${\textsf A}^3{{\textsf A}^*}{\textsf A}$, ${\textsf A}^*{\textsf A}^4$,
  ${\textsf A}^3{{\textsf A}^*}^2$,  ${\textsf A}^2{{\textsf A}^*}{\textsf A}{\textsf A}^*$, ${\textsf A}^2{{\textsf A}^*}^2{\textsf A}$, ${\textsf A}{{\textsf A}^*}^2{\textsf A}^2$, 
 & 24 \\

 &  ${{\textsf A}^*}^2{\textsf A}^3$, ${\textsf A}{{\textsf A}^*}{\textsf A}{\textsf A}^*{\textsf A}$, ${\textsf A}{{\textsf A}^*}^3{\textsf A}$, ${{\textsf A}^*}{\textsf A}^2{{\textsf A}^*}^2$, &   \\
 
  &   + ${\textsf A}\leftrightarrow {\textsf A}^*$ &  \\
  
   & &   \\

 $6$ & ${\textsf A}^6$, ${\textsf A}^5{\textsf A}^*$, ${\textsf A}^4{\textsf A}^*{\textsf A}$, ${\textsf A}^*{\textsf A}^5$, ${\textsf A}^4{{\textsf A}^*}^2$,   ${\textsf A}^3{\textsf A}^*{\textsf A}{\textsf A}^*$, ${\textsf A}^3{{\textsf A}^*}^2{\textsf A}$,  ${\textsf A}^2{{\textsf A}^*}{\textsf A}{\textsf A}^*{\textsf A}$, 
  
 & 40 \\
&
 ${\textsf A}^2{{\textsf A}^*}^2{\textsf A}^2$,  ${\textsf A}{{\textsf A}^*}^2{\textsf A}^3$, ${{\textsf A}^*}^2{\textsf A}^4$, ${\textsf A}^*{\textsf A}^4{\textsf A}^*$, 
 ${\textsf A}^*{\textsf A}^3{\textsf A}^*{\textsf A}$, ${\textsf A}^3{{\textsf A}^*}^3$,  ${\textsf A}^2{{\textsf A}^*}^2{\textsf A}{\textsf A}^*$,  & \\

& ${\textsf A}^2{{\textsf A}^*}^3{\textsf A}$, ${\textsf A}{{\textsf A}^*}^3{\textsf A}^2$,  ${\textsf A}{{\textsf A}^*}^2{\textsf A}{\textsf A}^*{\textsf A}$, ${\textsf A}{{\textsf A}^*}^2{\textsf A}^2{\textsf A}^*$, ${\textsf A}{\textsf A}^*{\textsf A}{\textsf A}^*{\textsf A}{\textsf A}^*$,
 & \\
&  + ${\textsf A}\leftrightarrow {\textsf A}^*$ &\\
\hline
\end{tabular}
        \end{center}
\end{example}
\vspace{3mm}

There is a close relationship between the algebra $\tilde{\cal A}_q^{\{\delta\}}$ and  the $q-$Onsager algebra ${\cal O}_q$. Indeed, following Corollary \ref{cor31} the algebra $\tilde{\cal A}_q^{\{\delta\}}$ admits a presentation with two generators $\cW_0,\cW_1$ subject to the relations (\ref{ec1})-(\ref{ec16}) with (\ref{recGk}), (\ref{recWk}).  Among the corresponding infinite family of polynomial relations satisfied by $\cW_0,\cW_1$ inherited from (\ref{ec1})-(\ref{ec16}), the relations of lowest degree are given by (\ref{qDG}) with   ${\textsf A}\rightarrow\cW_0, {\textsf A}^*\rightarrow\cW_1$, see eqs. (\ref{qDGW}):
\begin{lem}
 The basic generators ${\cW}_0,{\cW}_1$  of $\tilde{\cal A}_q^{\{\delta\}}$ satisfy the defining relations of the $q$-Onsager algebra.
\end{lem}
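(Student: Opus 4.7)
The plan is to deduce the $q$-Dolan-Grady relations directly from the recursive formulae (\ref{recG1}), (\ref{recW1}) together with the relation $[\cW_0,\cW_{-1}]=0$ from (\ref{WW}), which itself comes from the defining relation (\ref{qo4}) of $\cA_q$. This is essentially the content of the footnote following (\ref{WW}); the task is to turn that observation into a clean argument.

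First, I would substitute the expression $\cW_{-1}=\tfrac{1}{\rho}[\cW_0,\cG_1]_q+\cW_1$ given by (\ref{recW1}) into the relation $[\cW_0,\cW_{-1}]=0$ from (\ref{WW}). This yields
\[
[\cW_0,[\cW_0,\cG_1]_q]=\rho\,[\cW_1,\cW_0].
\]
Next, I would substitute the formula $\cG_1=[\cW_1,\cW_0]_q+\delta_1$ from (\ref{recG1}); since $\delta_1$ is a scalar it drops out of the outer commutators, giving
\[
[\cW_0,[\cW_0,[\cW_1,\cW_0]_q]_q]=\rho\,[\cW_1,\cW_0].
\]

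The final step is a short manipulation with $q$-commutators. Using $[\cW_1,\cW_0]_q=-[\cW_0,\cW_1]_{q^{-1}}$ together with the elementary identity $[X,[X,Y]_{q^{-1}}]_q=[X,[X,Y]_q]_{q^{-1}}$ (both verified by direct expansion: both sides equal $X^2Y-(q^2+q^{-2})XYX+YX^2$), one rewrites the previous display as the first $q$-Dolan-Grady relation $[\cW_0,[\cW_0,[\cW_0,\cW_1]_q]_{q^{-1}}]=\rho[\cW_0,\cW_1]$. The second $q$-Dolan-Grady relation follows by applying the automorphism $\Omega$ of Remark \ref{remomega}, which exchanges $\cW_0\leftrightarrow\cW_1$ and preserves the form of the identity; alternatively, one can run the same argument starting from the second equality $[\cW_1,\cW_2]=0$ of (\ref{WW}) and the expression $\cW_2=\tfrac{1}{\rho}[\cG_1,\cW_1]_q+\cW_0$ from (\ref{recW1}).

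There is no real obstacle: all ingredients have already been assembled in Section~3, and the only mild annoyance is the bookkeeping required to reconcile the $(q,q)$ ordering of $q$-commutators that naturally appears after substitution with the $(q,q^{-1})$ ordering used in the standard statement (\ref{qDG}) of the $q$-Dolan-Grady relations. This is handled by the single commutator identity noted above.
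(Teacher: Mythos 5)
Your argument is correct and is essentially the paper's own proof, which likewise obtains the $q$-Dolan--Grady relations by inserting the polynomial expressions for $\cW_{-1}$ and $\cW_2=\Omega(\cW_{-1})$ into the relations $[\cW_0,\cW_{-1}]=[\cW_1,\cW_2]=0$ (this is exactly the content of the footnote containing (\ref{qDGW})); your version merely keeps the computation in nested $q$-commutator form rather than fully expanding. One tiny point of care: the scalar $\delta_1$ does not drop out of the inner $q$-bracket, since $[\cW_0,\delta_1]_q=(q-q^{-1})\delta_1\cW_0$, but this term is annihilated by the outer ordinary commutator $[\cW_0,\cdot]$, so your displayed identity is correct.
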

\begin{proof} 
Insert the polynomial  expressions of ${\cW}_{-1}$ (resp. ${\cW}_{2}=\Omega({\cW}_{-1})$) given in (\ref{defel}) in the commutation relations (\ref{qo4}) for $k=0,l=1$.
\end{proof}

 It follows:
\begin{prop}\label{propmap}
The map $\Psi : {\cal O}_q \to \tilde{\cal A}_q^{\{\delta\}}$ such that
\beqa
\Psi({\textsf A}) = \cW_0, \quad \Psi( {\textsf A}^*)=\cW_1\label{map}
\eeqa
is a surjective homomorphism.
\end{prop}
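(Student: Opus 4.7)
The plan is to assemble two ingredients that are already in place: the fact that $\cW_0,\cW_1$ satisfy the $q$-Dolan-Grady relations, and the fact that they generate the whole quotient algebra. First I would invoke the universal property of $\cO_q$: by Definition \ref{qons}, $\cO_q$ is presented by two generators $\textsf{A},\textsf{A}^*$ modulo the relations (\ref{qDG}). Therefore, to produce an algebra homomorphism $\Psi:\cO_q\to \tilde{\cA}_q^{\{\delta\}}$ sending $\textsf{A}\mapsto \cW_0$ and $\textsf{A}^*\mapsto \cW_1$, it suffices to verify that the elements $\cW_0,\cW_1\in\tilde{\cA}_q^{\{\delta\}}$ satisfy the $q$-Dolan-Grady relations with the same parameter $\rho$. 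This is precisely the content of the preceding Lemma (and was already recorded in the footnote around (\ref{qDGW})). So $\Psi$ extends uniquely to an algebra homomorphism.

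For surjectivity, I would simply appeal to Corollary \ref{cor31}, which asserts that $\tilde{\cA}_q^{\{\delta\}}$ is generated as an algebra by $\cW_0$ and $\cW_1$. Since these two elements lie in the image of $\Psi$, and the image is a subalgebra, the image must be all of $\tilde{\cA}_q^{\{\delta\}}$. Combining the two steps completes the argument.

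There is really no obstacle here: both the homomorphism property and the generation statement have been established in the preceding material, and the proof is a one-line synthesis. The only point to be careful about is that the parameter $\rho$ appearing in (\ref{qDG}) matches the parameter $\rho$ from Definition \ref{defnCA}, which is the case by construction of the relations (\ref{recG1})-(\ref{recW1}) that underlie the derivation of (\ref{qDGW}). (Note that this proposition is explicitly labelled as a homomorphism, not an isomorphism; injectivity is the content of the later Conjecture \ref{conj2} and is not addressed here.)
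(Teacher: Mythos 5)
Your proof is correct and follows exactly the route the paper intends: the preceding Lemma (that $\cW_0,\cW_1$ satisfy the $q$-Dolan--Grady relations with the same $\rho$) yields the homomorphism via the universal property of the presentation of ${\cal O}_q$, and Corollary \ref{cor31} gives surjectivity. The paper leaves this as an immediate consequence ("It follows:") and your synthesis matches that argument.
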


We now compare the irreducible set of monomials in the $WG-$basis (\ref{basis1}) to the ones in the
zig-zag basis (\ref{zigzag}).  First, it is elementary to check that  both generating functions (\ref{genWG}) and (\ref{genzz}) coincide. Secondly, using the data in Example \ref{Ex3} and Example \ref{Ex4},
the transition matrix from one basis to another is considered up to $|\lambda|=|\lambda'|=4$, see Appendix \ref{ApB}. It is found that this matrix of size $29*29$ is invertible. 
\begin{conj}\label{conj2} The map $\Psi$ in Proposition \ref{propmap} is a bijection.
\end{conj}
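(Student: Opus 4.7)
By Proposition \ref{propmap}, $\Psi$ is already a surjective algebra homomorphism; only injectivity remains. I plan to prove it by a Hilbert-series (dimension) argument on matching filtrations.

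Equip ${\cal O}_q$ with the length filtration $\{{\cal O}_{|\lambda|}\}_{|\lambda|\in{\mathbb Z}_+}$ recalled in the excerpt and $\tilde{\cal A}_q^{\{\delta\}}$ with the filtration $\{\tilde{\cal A}_{q,n}^{\{\delta\}}\}_{n\in{\mathbb Z}_+}$ by total degree in $\cW_0,\cW_1$; the latter is well defined and exhaustive by Corollary \ref{cor31}. Since $\Psi({\textsf A})=\cW_0$ and $\Psi({\textsf A}^*)=\cW_1$ are both of degree one, $\Psi$ is filtration-preserving and its surjectivity forces $\Psi({\cal O}_{q,n})=\tilde{\cal A}_{q,n}^{\{\delta\}}$ for every $n$. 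In particular,
\[\dim\tilde{\cal A}_{q,n}^{\{\delta\}}\ \le\ \dim{\cal O}_{q,n}\qquad(n\in{\mathbb Z}_+).\]

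To upgrade this to equality I invoke Conjecture \ref{conj1}: the WG-basis then furnishes a basis of $\tilde{\cal A}_q^{\{\delta\}}$, and by Corollary \ref{col} together with (\ref{degWG}) exactly $d_n$ monomials of the form (\ref{basis1}) contribute at level $n$, so $\dim\tilde{\cal A}_{q,n}^{\{\delta\}}=\sum_{k\le n}d_k$. On the ${\cal O}_q$ side, the Ito-Terwilliger zig-zag basis gives a basis of ${\cal O}_q$ whose generating function (\ref{genzz}) coincides with (\ref{genWG}) via the elementary identity
\[\prod_{m\ge 1}\frac{1+v^m}{1-v^m}\ =\ \prod_{m\ge 1}\frac{1-v^{2m}}{(1-v^m)^2}\ =\ \prod_{m\ge 1}\frac{1}{(1-v^{2m})(1-v^{2m-1})^2},\]
which follows from $1+v^m=(1-v^{2m})/(1-v^m)$ and the even/odd splitting $\prod(1-v^m)=\prod(1-v^{2m-1})(1-v^{2m})$. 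Consequently $\dim\tilde{\cal A}_{q,n}^{\{\delta\}}=\dim{\cal O}_{q,n}<\infty$, so $\Psi|_{{\cal O}_{q,n}}$ is a surjection between vector spaces of equal finite dimension, hence bijective. Passing to the union over $n$ yields that $\Psi$ itself is a bijection.

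The serious step in this scheme is Conjecture \ref{conj1}, i.e.\ the linear independence of the WG-monomials; I expect this to be the main obstacle. The most direct attack is a Bergman-type diamond-lemma argument: fix a total ordering on words in $\{\cW_{-k},\cW_{k+1},\cG_{k+1},\tilde{\cG}_{k+1}\}$ refining the WG-degree (\ref{degWG}), turn the defining relations (\ref{ec1})-(\ref{ec16}) and the recursive formulae of Proposition \ref{prop1} into reduction rules aimed at the normal form (\ref{basis1}), and verify confluence on all overlap and inclusion ambiguities. An alternative, more representation-theoretic route is to construct a faithful $\tilde{\cal A}_q^{\{\delta\}}$-module, for instance via the coideal subalgebra realization of \cite{BB} composed with a generic tensor product of $U_q(\widehat{sl_2})$-modules, and to argue directly that the WG-monomials act as linearly independent operators. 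Both routes are delicate because the recursion of Proposition \ref{prop1} is nonlinear and the structure of the overlap ambiguities depends on the quotient data $\{\delta_{k+1}\}$ in an intricate way.
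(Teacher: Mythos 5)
The statement you are asked to prove is labelled a \emph{conjecture} in the paper, and the authors give no proof of it: their only support is the coincidence of the two generating functions (\ref{genWG}) and (\ref{genzz}) and the invertibility of the finite transition matrix in Appendix \ref{ApB} up to degree $4$. Your proposal is, in substance, exactly the authors' implicit reasoning made explicit --- a filtration-preserving surjection between spaces whose graded dimensions agree must be injective --- and your verification of the identity $\prod_{m\ge1}(1+v^m)/(1-v^m)=\prod_{m\ge1}(1-v^{2m})^{-1}(1-v^{2m-1})^{-2}$ is correct. As a conditional reduction of Conjecture \ref{conj2} to Conjecture \ref{conj1}, the argument is sound and worth recording.

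However, it is not a proof, and you say so yourself: the entire weight rests on Conjecture \ref{conj1}, which is open, and your two suggested routes to it (a diamond-lemma confluence check on the infinite family of relations (\ref{ec1})--(\ref{ec16}) together with the nonlinear recursion of Proposition \ref{prop1}, or the construction of a faithful module separating the WG-monomials) are sketches of programmes, not arguments. Two further points deserve care even granting Conjecture \ref{conj1}. First, your dimension count $\dim\tilde{\cal A}_{q,n}^{\{\delta\}}=\sum_{k\le n}d_k$ needs the WG-basis to be \emph{compatible with the filtration}, i.e.\ that the monomials (\ref{basis1}) of degree at most $n$ span the $n$-th filtered piece; a priori an element of low filtration degree could be a combination of higher-degree basis monomials whose leading terms cancel, so you should either build this into the statement of the PBW property or prove that the symbols of the degree-$k$ monomials are independent in the associated graded algebra. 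Second, the spanning half of Conjecture \ref{conj1} (that every word can be straightened into the ordered form (\ref{basis1})) is itself only checked in the paper for small degrees and would need a general reordering argument; note also that the displayed generating function (\ref{genzz}) really records the graded dimensions $\dim(\overline{\cal O}_{|\lambda|})$ rather than the cumulative $\dim({\cal O}_{|\lambda|})$, so you are comparing per-degree counts on both sides, which is what you want. In short: correct reduction, same route as the paper, but the key lemma remains unproven, so the conjecture is not settled by this proposal.
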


\vspace{0.5cm}
\noindent{\bf Acknowledgements:}  
We thank P. Terwilliger for discussions, useful comments and suggestions. P.B. thanks K. Raschel for discussions. P.B. is supported by C.N.R.S. 
   S.B. thanks the LMPT for hospitality, where part of this work has been done.
S.B. is supported by a public grant as part of the Investissement d'avenir project, reference ANR-11-LABX-0056-LMH,
LabEx LMH.
\vspace{0.2cm}

\newpage

\begin{appendix}

\section{The generators ${\cal W}_{-2},{\cal W}_{3},{\cal G}_{3},\tilde{\cal G}_3$ }\label{ApA}
Recall that the generators ${\cal G}_{1}$, ${\cal W}_{-1}$, ${\cal G}_{2}$ are given explicitly in terms of $\cW_0,\cW_1$ in Example \ref{ex1}, and $\tilde{\cal G}_{1}=\Omega({\cal G}_{1})$, ${\cal W}_{-1}=\Omega({\cal W}_{2})$, $\tilde{\cal G}_{2}=\Omega({\cal G}_{2})$. In addition, one finds:  
\beqa
&&{\cW}_{-2} = w_1\cW_0^3{\cW_1}^2 + w_2{\cW_0}^2\cW_1^2\cW_0  + w_3\big(\cW_1{\cW_0}^2\cW_1\cW_0 + \cW_0^2\cW_1\cW_0\cW_1\big) + w_4\cW_0\cW_1\cW_0\cW_1\cW_0 \nonumber \\
&&\qquad\qquad+ w_5{\cW_1}^2{\cW_0}^3+w_6\cW_0{\cW_1}^2{\cW_0}^2+w_7\cW_1{\cW_0}^3\cW_1\nonumber\\
&&\qquad \qquad   + w_8\cW_0\cW_1^2+w_9 \cW_1^2\cW_0
 + w_{10}\big(\cW_0^2\cW_1 +\cW_1\cW_0^2\big)+\ w_{11} \cW_1\cW_0\cW_1 + w_{12}\cW_0\cW_1\cW_0 \nonumber\\
&&\qquad \qquad  +\ w_{13}\cW_0^3 + w_{14}\cW_0+ \ w_{15}\cW_1\nonumber
\eeqa
where
\beqa
&&w_1=\frac{1}{\rho^2} , \quad w_2= - \frac{[2]_{q}[8]_{q}}{\rho^2[4]_{q}^2}, \quad w_3=-\frac{[4]_{q}}{\rho^2[2]_{q}},\quad  w_4=\frac{1}{\rho^2}\left(\frac{[2]_q[3]_q[8]_{q}}{[4]_q^2} +1\right),\nonumber\\
&& w_5=\frac{1}{\rho^2[3]_q}, w_6=-\frac{[2]_q[8]_q}{\rho^2[3]_q[4]_q},Ê\quad w_7 =\frac{[2]^2_q}{\rho^2[3]_q[4]_q},\quad  w_8= -\ \frac{1}{\rho},\quad w_9=-\ \frac{1}{\rho[3]_q},\nonumber\\
&& w_{10}=- \frac{(q-q^{-1})}{\rho^2}\delta_1,\quad  w_{11}=\frac{1}{\rho^2[3]_q}\left(\frac{[2]_q[3]_q[8]_{q}}{[4]_q^2} +1\right),\quad w_{12}=\frac{(q-q^{-1})[4]_{q}}{\rho^2[2]_{q}}\delta_1,\quad \nonumber\\
&& w_{13}= \frac{1}{\rho}\frac{(q-q^{-1})^2[2]_{q}}{[4]_{q}},\quad  w_{14} =1-\frac{(q-q^{-1})^2[2]_{q}}{\rho^2[4]_{q}}\delta_1^2+\frac{(q-q^{-1})}{\rho}\delta_2,\quad w_{15}= \frac{(q-q^{-1})}{\rho}\delta_1\nonumber
\eeqa
\vspace{0.3cm}

and $\cW_3=\Omega(\cW_{-2})$.

\vspace{1cm}
\beqa
\cG_3&=& g_1\cW_0^3\cW_1^3 + g_2\cW_0^2\cW_1^2\cW_0\cW_1 + g_3\cW_0^2\cW_1^3\cW_0 + g_4\cW_0\cW_1^3\cW_0^2 \nonumber\\
&&  + g_5\cW_0\cW_1^2\cW_0\cW_1\cW_0  + g_6\cW_0\cW_1^2\cW_0^2\cW_1 + g_7\cW_0\cW_1\cW_0\cW_1\cW_0\cW_1  \nonumber\\
&& +   g_8\cW_1^3\cW_0^3 + g_9\cW_1^2\cW_0^2\cW_1\cW_0 + g_{10}\cW_1^2\cW_0^3\cW_1 + g_{11}\cW_1\cW_0^3\cW_1^2\nonumber\\
&& + g_{12}\cW_1\cW_0^2\cW_1\cW_0\cW_1   + g_{13}\cW_1\cW_0^2\cW_1^2\cW_0 + g_{14}\cW_1\cW_0\cW_1\cW_0\cW_1\cW_0  \nonumber\\
&& + g_{15}\cW_0^3\cW_1 +  g_{16}\cW_0^2\cW_1\cW_0 +  g_{17}\cW_1\cW_0^3  + g_{18}\cW_0^2\cW_1^2
+  g_{19}\cW_0\cW_1^2\cW_0 +  g_{20}\cW_0\cW_1\cW_0\cW_1   \nonumber\\
&& + g_{21}\cW_1^3\cW_0 +  g_{22}\cW_1^2\cW_0\cW_1 +  g_{23}\cW_0\cW_1^3 + g_{24}\cW_1^2\cW_0^2
+  g_{25}\cW_1\cW_0^2\cW_1 +  g_{26}\cW_1\cW_0\cW_1\cW_0  \nonumber\\ &&+ g_{27}\cW_0^2 + g_{28}\cW_1^2+ g_{29}\cW_0\cW_1 + g_{30}\cW_1\cW_0  + g_{31}I\!\!I \nonumber
 \eeqa
where
\beqa
&& g_1= -\frac{q^{-3}[2]_{q}}{\rho^2[6]_{q}}
,\quad
 g_2= \frac{2(q^{-7}+q^{-3}+q^{-1})[2]^2_{q}[3]_{q}}{\rho^2[4]_{q}[6]_{q}},\nonumber\\
&&g_3= \frac{q^{-1}[4]_{q}(q^{2}-q^{-2}-1)}{\rho^2[6]_{q}} ,\quad g_4=  \frac{(q^5+q^3-q-q^{-3}-q^{-5}-q^{-7})[2]_q^2}{\rho^2[4]_{q}[6]_q},\nonumber\\
&&g_5= -\frac{(q-q^{-1})[3]^2_{q}[4]_{q}}{\rho^2[6]_{q}}  ,\quad g_6= \frac{q^{-1}[2]_{q}[8]_{q}}{\rho^2[4]_{q}^2},\quad g_7= -\frac{(q^{-9}+q^{-7}+2q^{-5}+q^{-3}+3q^{-1})[2]_{q}^2[3]_{q}}{\rho^2 [4]_{q}[6]_{q}},\nonumber\\
&& g_8= \frac{q^3[2]_{q}}{\rho^2[6]_{q}},\quad  g_9= -\frac{2(q^7+q^3+q)[2]_{q}^2[3]_{q}}{\rho^2[4]_{q}[6]_{q}} ,\quad  \nonumber\\
&& g_{10}= -\frac{q[4]_{q}(q^{-2}-q^2-1)}{\rho^2[6]_{q}},\quad g_{11}=\frac{(q^7+q^5+q^3+q^{-1}-q^{-3}-q^{-5})[2]_{q}^2}{\rho^2[4]_{q}[6]_{q}},\nonumber\\
&& g_{12}=g_5,\quad g_{13}= -\frac{q[2]_q[8]_{q}}{\rho^2[4]^2_{q}} ,\quad g_{14}= \frac{(q^9+q^7+2q^5+q^3+3q)[2]_{q}^2[3]_{q}}{\rho^2[4]_{q}[6]_q},\nonumber
\eeqa
\beqa
&& g_{15}= -\frac{q^{-1}(2q^6+q^4+2q^{2}+1+4q^{-2}+2q^{-4}+2q^{-6})[2]_{q}}{\rho[3]_{q}^2[4]_{q}},\quad \nonumber\\
&& g_{16}= \frac{2q[6]_{q}}{\rho[3]_{q}[4]_{q}},\quad g_{17}= -\frac{q(q^6+2q^4+3q^{2}+q^{-2}-q^{-4}-q^{-6}-q^{-8})[2]_q^2}{\rho[3]_{q}[4]_{q}[6]_{q}} ,\nonumber\\
&& g_{18}=  \frac{q^{-2}(q-q^{-1})[2]^2_{q}}{\rho^2[4]_{q}}\delta_1 ,\quad g_{19}= -\frac{(q-q^{-1})^2[2]_{q}[3]_{q}}{\rho^2[4]_{q}}\delta_1,\nonumber\\
&& g_{20}= \frac{-q^{-3}(q-q^{-1})(q^2+ [3]_{q})[2]_{q}}{\rho^2[4]_{q}}\delta_1,\nonumber\\
&& g_{21}= \frac{q(2q^6+2q^4+4q^{2}+1+2q^{-2}+q^{-4}+2q^{-6})[2]_{q}}{\rho[3]_{q}^2[4]_{q}},\nonumber\\
&& g_{22}= -\frac{2q^{-1}[6]_{q}}{\rho[3]_{q}[4]_{q}} ,\quad g_{23}= -\frac{q^{-1}(q^8+q^6+q^{4}-q^{2}-3q^{-2}-2q^{-4}-q^{-6})[2]_{q}^2}{\rho[3]_{q}[4]_{q}[6]_{q}} ,\nonumber\\
&& g_{24}=-\frac{q^{2}(q-q^{-1})[2]^2_{q}}{\rho^2[4]_{q}}\delta_1 ,\quad g_{25}= g_{19} ,\nonumber\\
&& g_{26}= \frac{q^{3}(q-q^{-1})(q^{-2}+ [3]_{q})[2]_{q}}{\rho^2[4]_{q}}\delta_1,\nonumber\\
&& g_{27}= g_{28}= \frac{(q-q^{-1})^2[2]_{q}}{\rho[4]_{q}}\delta_1,\nonumber\\
&& g_{29}= -\frac{q^{-1}(q-q^{-1})}{\rho^2}\left(\rho\delta_2 -\frac{(q-q^{-1})[2]_{q}}{[4]_{q}} \delta_1^2\right) + \frac{q^3(q^4-q^{-8}-2q^{-6})[2]_{q}^2}{[3]_{q}[4]_{q}[6]_{q}},\nonumber\\
&&\quad g_{30}= \frac{q(q-q^{-1})}{\rho^2}\left(\rho\delta_2 -\frac{(q-q^{-1})[2]_{q}}{[4]_{q}} \delta_1^2\right) + \frac{q^{-3}(q^8-q^{-4}+2q^{6})[2]_q^2}{[3]_{q}[4]_{q}[6]_{q}},\nonumber\\
&& g_{31}= \delta_3 + \frac{1}{[2]_{q}^2}\delta_1+ \frac{(q-q^{-1})^2[2]_{q}^2[3]_{q}}{[4]_{q}[6]_{q}}\delta_1^3-\frac{(q-q^{-1})[2]_{q}[3]_{q}}{\rho[6]_{q}}\delta_1\delta_2  \nonumber
\eeqa
\vspace{0.5cm}

and $\tilde{\cG}_3=\Omega(\cG_{3})$.

\vspace{2mm}

\footnotesize

\begin{landscape}

\section{Transition coefficients between the zig-zag basis and $WG-$basis for $|\lambda|=|\lambda'|\leq 4$}\label{ApB}

In this Appendix, the non-vanishing entries of the transition matrix from the zig-zag basis with elements $1,{\textsf A},{\textsf A}^*,...$ to the $WG-$basis with elements $1,\cW_0,\cW_1,...$  are displayed for $|\lambda|=|\lambda'|\leq 4$ in the table below. For instance, Proposition \ref{propmap} together with the polynomial formulae (\ref{recG1}) give the combination:
$ q {\textsf A}{\textsf A}^*{\textsf A} - q^{-1}{\textsf A}^2{\textsf A}^* + \delta_1{\textsf A} \longrightarrow \cW_0\cG_1$.
\vspace{2mm}

\begin{center}
\begin{tabular}{|c | c | c | c | c | c | c| c | c | c | c | c| c | c | c | c | c | c | c | c | c | c | c | c | c | c| c | c | c  | c |c |c |c |c |}

 \hline 
 
 & $1$ & $A$  & $A^*$ & $A^2$ & ${A^*}^2$ & $AA^*$  & $A^*A$ & $A^3$ & ${A^*}^3$  & $A^2A^*$ &  $A{A^*}^2$ & $AA^*A$   & $A^*AA^*$ & $A^*A^2 $ & ${A^*}^2A$ & $A^4$  & ${A^*}^4$ & $A^3A^*$ & $A^2{A^*}^2$& $A{A^*}^3$ & \\
\hline 
$1$ & 1 &  & &  & &  & &  & &  & &  & & & &  & & & & &\\
 
 \hline 
$\cW_0$ &  & 1 & &  & &  & &  & &  & &  & & & &   & & & & &\\
 
 \hline 
$\cW_1$ &  &  & 1 &  & &  & &  & &  & &  & & & &  & & & & & \\
 
 \hline 
$\cW_0^2$ &  &  &  & 1 &  & & &  & &  & &  & & & &  & & & & & \\
 
 \hline 
$\cW_1^2$  &  &  &  &   &   1    &  & &  & &  & & & &  & &  & & & & & \\
 
 \hline 
$\cW_0\cW_1$ &  &  &  &   &    & 1 &   &  & &  & & & &  & & & &  & & & \\
 
 \hline 
$\cG_1$ & $\delta_1$ &  &  &   &    & $-q^{-1}$  & $q$ &  & & & &  & &  & & & & & & &\\
 
 \hline 
$\cW_0^3$ &  &  &  &   &   &   &   & 1  & &  & &  & & & &  & & & & &\\
 
 \hline 
$\cW_1^3$ &  &  & &  & &  & &  & 1  & & &  & & & & & & &  & &\\
 
 \hline 
$\cW_0^2\cW_1$  &  &  & &  & &  & &  & & 1  & &  & & & &   & & & & &\\
 
 \hline 
$\cW_0\cW_1^2$  &  &  & &  & & & &  & &  & 1 & & & & &  & & & & &\\
  
 \hline 
$\cW_0\cG_1$ &  & $\delta_1$ & &  & &  & &  & & $-q^{-1}$ & &  $q$ & &  & & & & &  & &\\
 
 \hline 
$\cG_1\cW_1$ &  &  & $\delta_1$ &  & &  & &  & &  & $-q^{-1}$ &  & $q$ &  & & & & &  & &\\
  \hline 

$\cW_{-1}$ & $a$ &  &  &  & &  & &  & &  $-\frac{1}{\rho}$ &  & $b$ &  & $-\frac{1}{\rho}$ & & & & & & &\\  
  
\hline
  
$\cW_{2}$ &  & $a$ &  &  & &  & &  &  &  & $-\frac{1}{\rho}$  &  & $b$ &  & $-\frac{1}{\rho}$ & & & & & &\\  
  
\hline  

$\cW_0^4$ &  &  &  &  & &  & &  & &  & &  &  &  & & $1$ & & & & &\\

\hline 

$\cW_1^4$ &  &  &  &  & &  & &  & &  & &  &  &  &  & & $1$ & & & & \\

\hline 

$\cW_0^3\cW_1$ &  &  &  &  & &  & &  & &  & &  & & &  & & & 1 & & & \\

\hline 

$\cW_0^2\cW_1^2$ &  &  &  &  & &  & &  & &  & &  & & &  &  & & & 1& &\\

\hline 

$\cW_0\cW_1^3$ &  &  &  &  & &  & &  & &  & &  & & &  & & &  & & 1& \\

\hline 

\end{tabular}
\end{center}

\vspace{9mm}

\begin{center}
\begin{tabular}{|c | c | c | c | c | c | c| c | c | c | c | c| c | c | c | c | c | c | c | c | c | c | c | c | c | c| c | c | c  | c |c |c |c |}

 \hline 
 
 & $1$ & $A^2$  & ${A^*}^2$ & $AA^*$ & $A^*A$ & $A^3A^*$  & $A^2{A^*}^2$ & $A{A^*}^3$ & & $A^2A^*A$  & ${A^*}^3A$ & $A^*A^3$ &  ${A^*}^2A A^*$  & $AA^*AA^*$ & $A^*A^2A^*$  & $A{A^*}^2A$ & $A^*AA^*A$ & ${A^*}^2A^2$  \\
\hline 

$\cW_0^2\cG_1$ &  & $\delta_1$ &  &  &  &  $-q^{-1}$ & &  & & $q$ &  & &  &  & & &   &  \\

\hline

$\cG_1\cW_1^2$  &  &  & $\delta_1$ & $-\frac{q\rho}{[3]_{q}}$ &  $\frac{q\rho}{[3]_{q}}$&  & &  $i$ & &  & $-\frac{q}{[3]_{q}}$ &  & $q$ & &  & & &  \\

 \hline 
$\cW_{0}\cW_{-1}$ &  &  $a$ &   & $\frac{\rho b}{[3]_{q}}$ & $\frac{1}{[3]_{q}}$ & - $\frac{b}{[3]_{q}}$ &   &  &   & $\frac{[2]_{q^2}-1}{\rho}$ &  & $-\frac{1}{\rho[3]_{q}}$ & & &  & & &  \\
 
 \hline 
$\cW_{2}\cW_{1}$ &  &  & $a$ & $\frac{1}{[3]_{q}}$ & $\frac{\rho b}{[3]_{q}}$ &  & & $-\frac{1}{\rho[3]_{q}}$  &   &   & $-\frac{b}{[3]_{q}}$ & & $\frac{[2]_{q^2}-1}{\rho}$ &  &  & & &  \\
 
 \hline 
$\cW_0\cG_1\cW_1$ &  &  &  & $\delta_1$ &  && $-q^{-1}$ &  & & &  & &  & $q$ &  & & &  \\

 \hline 
 $\cW_{-1}\cW_1$ &  &  & $1$ & $a$  &       && $-\frac{1}{\rho}$  &  & & &  & &  &  $b$ & $-\frac{1}{\rho}$ & &  & \\

 \hline 
$\cW_{0}\cW_{2}$ &  & $1$ &   &  $a$  &  & & $-\frac{1}{\rho}$   & & & & &  &  & $b$ & &$-\frac{1}{\rho}$ &  & \\

 \hline 
  $\cG_1^2$  & $\delta_1^2$ &  &  & $-2q^{-1}\delta_1$  &  $2q\delta_1$  &   &   &  & & &  & &  & $q^{-2}$ & $-1$& $-1$ & $q^2$ & \\
 
 \hline 
$\cG_2$ & $j$ & $e$ & $e$ & $-aq^{-1}$ & $aq$ & & $c$&  & &  &  & &  & $f$ & $h$ & $h$ & $g$ & $d$ \\

\hline

\end{tabular}
\end{center}

\vspace{3mm}
      
Here we define the scalars: $a=\frac{\delta_1(q-q^{-1})}{\rho}$, $b=\frac{[4]_{q}}{\rho[2]_{q}}$,
 $c=\frac{q^{-2}[2]^2_{q}}{\rho[4]_{q}}$, $d=-\frac{q^{2}[2]^2_{q}}{\rho[4]_{q}}$, $e=(q-q^{-1})\frac{[2]_{q}}{[4]_{q}}$,
 $f=-\frac{(q^{-5}+q^{-3}+2q^{-1})[2]_{q}}{\rho[4]_{q}}$, $g=\frac{(q^{5}+q^{3}+2q)[2]_{q}}{\rho[4]_{q}}$, $h=-(q-q^{-1})\frac{[2]_{q}[3]_{q}}{\rho[4]_{q}}$, $i=-\frac{q^{-2}[2]_{q}}{[3]_{q}}$,
$j=\delta_2-\delta_1^2(q-q^{-1})\frac{[2]_{q}}{\rho[4]_{q}}$.

\end{landscape}

\normalsize

\vspace{2cm}

\end{appendix}

\vspace{0.2cm}

\end{document}